\newtheorem{theorem}{Theorem}[section]
\newtheorem{corollary}[theorem]{Corollary}
\newtheorem{lemma}[theorem]{Lemma}
\newtheorem{proposition}[theorem]{Proposition}
\newtheorem{remark}[theorem]{Remark}
\theoremstyle{definition}
\numberwithin{equation}{section}
\newcommand{\N}{\mathbb N}
\begin{document}

\title{Topological entropy of sets of generic points for actions of amenable groups}

\author [Dongmei Zheng and Ercai Chen]{Dongmei Zheng and Ercai Chen}

\address{School of Mathematical Sciences and Institute of Mathematics, Nanjing Normal University, Nanjing 210023, Jiangsu, P.R.China
 \& Department of Applied Mathematics, College of Science, Nanjing Tech University, Nanjing 211816, Jiangsu, P.R. China} \email{dongmzheng@163.com}

\address{School of Mathematical Sciences and Institute of Mathematics, Nanjing Normal University, Nanjing 210023, Jiangsu, P.R.China} \email{ecchen@njnu.edu.cn}

\subjclass[2010]{Primary: 37B40, 28D20, 54H20}
\thanks{}

\keywords {entropy, generic point, amenable group, variational principle}

\begin{abstract}
Let $G$ be a countable discrete amenable group which acts continuously on a compact metric space $X$ and let $\mu$ be an ergodic $G-$invariant Borel probability measure on $X$.
For a fixed tempered F{\o}lner sequence $\{F_n\}$ in $G$ with $\lim\limits_{n\rightarrow+\infty}\frac{|F_n|}{\log n}=\infty$,
we prove the following variational principle:
$$h^B(G_{\mu},\{F_n\})=h_{\mu}(X,G),$$
where $G_{\mu}$ is the set of generic points for $\mu$ with respect to $\{F_n\}$ and $h^B(G_{\mu},\{F_n\})$ is the Bowen topological entropy (along $\{F_n\}$) on $G_{\mu}$.
This generalizes the classical result of Bowen in 1973.
\end{abstract}

\maketitle

\markboth{D. Zheng and E. Chen}{On the topological entropy of generic points for actions of amenable groups}

\section{Introduction}

In 1973, Bowen \cite{B} introduced a definition of topological entropy of subsets for $\mathbb Z-$ or $\mathbb N-$action systems, which later on was known as the {\it Bowen topological entropy}.
The idea comes from resembling the definition of Hausdorff dimension to dynamical system.
This definition of entropy plays a key role in many aspects of ergodic theory and dynamical system, especially with the connection with dimension theory, statistical physics and multifractal analysis.

In that paper, Bowen proved the following results:
\begin{enumerate}
  \item[(A).] Bowen topological entropy of the whole system equals to the usual topological entropy.
  \item[(B).] If $\mu$ is an invariant Borel probability measure and $Y$ a subset with $\mu(Y)=1$, then the Bowen topological entropy of $Y$ is bigger than the
        measure-theoretic entropy with respect to $\mu$.
  \item[(C).] If in addition $\mu$ is ergodic, then the Bowen topological entropy of the set of generic points of $\mu$ is equal to the
        measure-theoretic entropy with respect to $\mu$.
\end{enumerate}

It is nature to know whether the above results still hold for dynamical systems with more general group actions.

Let $G$ be a countable discrete infinite group with the unit $e_G$. $G$ is said to be {\it amenable} if there exists a sequence of finite subsets $\{F_n\}$ of $G$ which is asymptotically invariant, i.e.,
$$\lim_{n\rightarrow+\infty}\frac{|F_n\vartriangle gF_n|}{|F_n|}=0, \text{ for all } g\in G.$$
Such sequences are called {\it F{\o}lner sequences}.

Throughout this paper, we let $(X,G)$  be a $G-$action topological dynamical system, where $X$ is a compact metric space with metric $d$ and $G$ a countable discrete infinite amenable group
acting on $X$ continuously. Denote by $M(X)$, $M(X,G)$ and $E(X,G)$ the collection of Borel probability measures,
$G$-invariant Borel probability measures and ergodic $G$-invariant Borel probability measures on $X$ respectively. Another equivalent definition for the group $G$ to be amenable is that
$M(X,G)$ is non-empty when $G$ acts continuously on every compact metric space $X$. One may refer to Ornstein and Weiss \cite{OW2} for more knowledge of amenable group actions.

Let $h_{top}(X,G)$ be the topological entropy of $(X,G)$. For $\mu\in M(X,G)$, let $h_{\mu}(X,G)$ be the measure-theoretic entropy of $(X,G)$ with respect to $\mu$.
For a subset $Y\subset X$ and a F{\o}lner sequence $\{F_n\}$ in $G$, let $h^B_{top}(Y,\{F_n\})$ be the Bowen topological entropy of $Y$. In section 2, we will give precise definitions
of these entropies.

Recall that a F{\o}lner sequence $\{F_n\}$ in $G$ is said to be {\it tempered} (see Shulman \cite{S}) if there exists a constant $C$ which is independent of $n$ such that
\begin{align}\label{tempered}
|\bigcup_{k<n}F_k^{-1}F_n|\le C|F_n|, \text{ for any }n.
\end{align}
In \cite{ZC}, Bowen's result (A) is proved to be true when the F{\o}lner sequence $\{F_n\}$ is tempered and satisfying certain increasing condition.
More precisely,
\begin{theorem}[Theorem 1.1 of \cite{ZC}]\label{th-1-1}\quad
Let $(X,G)$ be a compact metric $G-$action topological dynamical system and $G$ a discrete countable amenable group, then for any tempered F{\o}lner sequence $\{F_n\}$ in $G$
with the increasing condition
\begin{equation}\label{eq-1-2}
    \lim\limits_{n\rightarrow+\infty}\frac{|F_n|}{\log n}=\infty,
\end{equation}
$$h_{top}^B(X,\{F_n\})=h_{top}(X,G).$$
\end{theorem}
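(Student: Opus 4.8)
The plan is to prove the two inequalities $h_{top}^B(X,\{F_n\})\le h_{top}(X,G)$ and $h_{top}^B(X,\{F_n\})\ge h_{top}(X,G)$ separately, where $h_{top}^B(Y,\{F_n\})$ is obtained (as will be made precise in Section~2) from the Carath\'eodory quantities $M(Y,s,N,\epsilon)=\inf\sum_i\exp(-s|F_{n_i}|)$, the infimum running over countable covers of $Y$ by Bowen balls $B_{F_{n_i}}(x_i,\epsilon)=\{y:\max_{g\in F_{n_i}}d(gx_i,gy)<\epsilon\}$ with all $n_i\ge N$. The upper bound is soft. Fix $\epsilon>0$; for each $N$ a minimal $(F_N,\epsilon)$--spanning set $S_N$ of $X$ gives a cover of $X$ by $|S_N|$ Bowen balls $B_{F_N}(\cdot,\epsilon)$, so $M(X,s,N,\epsilon)\le|S_N|\exp(-s|F_N|)$ for every $s$. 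Since $\tfrac1{|F_N|}\log|S_N|\to h_{top}(X,G,\epsilon)$ by the Ornstein--Weiss lemma and $|F_N|\to\infty$, any $s>h_{top}(X,G,\epsilon)$ makes this bound tend to $0$, so $M(X,s,\epsilon)=0$ and $h_{top}^B(X,\{F_n\},\epsilon)\le h_{top}(X,G,\epsilon)$; letting $\epsilon\downarrow0$ gives the upper bound. (Only $|F_N|\to\infty$ is used here.)

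For the lower bound, the variational principle for the topological entropy of amenable actions yields $h_{top}(X,G)=\sup\{h_\mu(X,G):\mu\in E(X,G)\}$, so it suffices, given an ergodic $\mu$ and a small $\gamma>0$, to produce a Borel set $A\subset X$ with $\mu(A)>\tfrac12$ and a scale $\epsilon_0>0$ with $h_{top}^B(A,\{F_n\},\epsilon)\ge h_\mu(X,G)-\gamma$ for all $\epsilon<\epsilon_0$ (then $h_{top}^B(X,\{F_n\})\ge h_{top}^B(A,\{F_n\})\ge h_\mu(X,G)-\gamma$, and $\gamma,\mu$ are arbitrary). Pick a finite partition $\xi$ with $k$ atoms, $\mu(\partial\xi)=0$, small diameter and $h_\mu(X,G,\xi)>h_\mu(X,G)-\gamma/4$ (spheres about a fixed point have $\mu$--null boundary for all but countably many radii, and one refines to raise the entropy). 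Write $\xi_{F_n}=\bigvee_{g\in F_n}g^{-1}\xi$, and for $\delta>0$ let $\partial_\delta\xi$ be the set of points whose $\delta$--ball is not contained in a single atom of $\xi$, so $\mu(\partial_\delta\xi)\downarrow0$. Because $\{F_n\}$ is tempered, Lindenstrauss' pointwise ergodic theorem and the amenable Shannon--McMillan--Breiman theorem hold along $\{F_n\}$; applying them to $\xi$ and to the indicator of $\partial_\delta\xi$, using ergodicity so the a.e.\ limits are constant, and invoking Egorov's theorem after fixing $\epsilon_0$ so small that $\mu(\partial_{\epsilon_0}\xi)\log k<\gamma/4$, we obtain a Borel set $A$ with $\mu(A)>\tfrac12$ and an $N_0$ such that, for all $x\in A$ and $n\ge N_0$, $\mu(\xi_{F_n}(x))\le\exp(-(h_\mu(X,G,\xi)-\gamma/4)|F_n|)$ and $\#\{g\in F_n: gx\in\partial_{\epsilon_0}\xi\}\le\tfrac{\gamma}{4\log k}|F_n|$.

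Now fix $\epsilon\le\epsilon_0/2$ and $t=h_\mu(X,G)-\gamma$, and let $\{B_{F_{n_i}}(x_i,\epsilon)\}_i$ be any cover of $A$ with all $n_i\ge N\ge N_0$. Discarding balls disjoint from $A$, for each remaining $i$ pick $y_i\in A\cap B_{F_{n_i}}(x_i,\epsilon)$; then $B_{F_{n_i}}(x_i,\epsilon)\subset B_{F_{n_i}}(y_i,\epsilon_0)$, and if $g\in F_{n_i}$ with $gy_i\notin\partial_{\epsilon_0}\xi$ then $gy_i$ and every point within $\epsilon_0$ of it lie in one $\xi$--atom, so $B_{F_{n_i}}(y_i,\epsilon_0)$ is contained in the union of those atoms of $\xi_{F_{n_i}}$ that agree with $\xi_{F_{n_i}}(y_i)$ off $E_i:=\{g\in F_{n_i}: gy_i\in\partial_{\epsilon_0}\xi\}$ --- at most $k^{|E_i|}\le\exp(\tfrac\gamma4|F_{n_i}|)$ of them. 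Any such atom that meets $A$ equals $\xi_{F_{n_i}}(z')$ for some $z'\in A$, hence has measure $\le\exp(-(h_\mu(X,G,\xi)-\gamma/4)|F_{n_i}|)$. Therefore $\mu(A)\le\sum_i\mu(B_{F_{n_i}}(x_i,\epsilon)\cap A)\le\sum_i k^{|E_i|}\exp(-(h_\mu(X,G,\xi)-\gamma/4)|F_{n_i}|)\le\sum_i\exp(-t|F_{n_i}|)$, the last step using $h_\mu(X,G,\xi)-\gamma/2>h_\mu(X,G)-\gamma=t$. Since the cover was arbitrary, $M(A,t,N,\epsilon)\ge\mu(A)>\tfrac12$ for all $N\ge N_0$, so $M(A,t,\epsilon)>0$ and $h_{top}^B(A,\{F_n\},\epsilon)\ge t$, as required.

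The main obstacle is the middle step: producing, uniformly in $n$ on a set of positive measure, both the exponential smallness of $\mu(\xi_{F_n}(x))$ and the sparseness of the visits of the orbit of $x$ to $\partial_{\epsilon_0}\xi$. Here amenability is used through the Ornstein--Weiss machinery; temperedness of $\{F_n\}$ is exactly what makes Lindenstrauss' ergodic theorem and the amenable Shannon--McMillan--Breiman theorem available along $\{F_n\}$; and the increasing condition $|F_n|/\log n\to\infty$ guarantees $\sum_n\exp(-c|F_n|)<\infty$ for every $c>0$, which is what one uses, via a Borel--Cantelli argument, to pass from the almost-everywhere convergence in those theorems to the uniform-in-$n$ bounds above (and, in the finer theorem on generic points, to cover the non-compact set of generic points economically). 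The reduction to partitions with $\mu$--null boundary and the elementary count converting Bowen balls into unions of few atoms are the routine remaining ingredients.
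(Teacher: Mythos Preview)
This theorem is not proved in the present paper: it is quoted as Theorem~1.1 of \cite{ZC} and used as an input. So there is no ``paper's own proof'' here to compare your argument against line by line.

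That said, your argument is sound. The upper bound via spanning sets and the Ornstein--Weiss lemma is standard and correct. For the lower bound you run a direct Brin--Katok/Frostman argument: pick an ergodic $\mu$, a partition $\xi$ with $\mu(\partial\xi)=0$ and nearly full entropy, use the amenable SMB theorem (this is where temperedness and the growth condition \eqref{eq-1-2} enter, cf.\ Theorem~\ref{th-2-2}) together with Lindenstrauss' pointwise ergodic theorem applied to $\chi_{\partial_{\epsilon_0}\xi}$, uniformise via Egorov on a set $A$ of measure $>\tfrac12$, and then show that any Bowen ball centred near $A$ is covered by at most $k^{|E_i|}$ atoms of $\xi^{F_{n_i}}$, each of which, if it meets $A$, has $\mu$-measure at most $\exp(-(h_\mu(G,\xi)-\gamma/4)|F_{n_i}|)$. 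The mass bound $\mathcal M(A,t,N,\epsilon)\ge\mu(A)$ follows, giving $h^B_{top}(X,\{F_n\})\ge h_\mu(X,G)-\gamma$. This is exactly the mechanism behind Lemma~\ref{lem-2} in this paper, specialised to the ergodic case.

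One small inaccuracy in your closing paragraph: you write that the growth condition \eqref{eq-1-2} is used ``via a Borel--Cantelli argument, to pass from the almost-everywhere convergence \ldots\ to the uniform-in-$n$ bounds above''. In your own outline you pass to uniform bounds by Egorov, which needs no summability; the summability $\sum_n e^{-c|F_n|}<\infty$ coming from \eqref{eq-1-2} is used \emph{inside} the proof of the SMB theorem (Theorem~\ref{th-2-2}), not at the Egorov step. This does not affect the correctness of the argument.

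For context, the route taken in \cite{ZC} (as visible from Theorem~\ref{th-3-2} quoted here) packages your lower-bound computation differently: it first proves a variational principle $h^B_{top}(K,\{F_n\})=\sup\{\underline h_\mu^{loc}(\{F_n\}):\mu(K)=1\}$ for compact $K$, and then identifies $\underline h_\mu^{loc}(\{F_n\})$ with $h_\mu(X,G)$ for invariant $\mu$ via the Brin--Katok formula (Theorem~\ref{th-A-1}), finishing with the classical variational principle. Your argument carries out the same measure-versus-cover comparison by hand for a single ergodic $\mu$, which is more elementary and avoids the detour through lower local entropy, at the cost of not yielding the intermediate variational principle.
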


In this paper, by using different approach of Bowen's original proofs for $\mathbb Z$-actions, we will show that Bowen's results (B) and (C) also hold under the same condition as in Theorem \ref{th-1-1}. The statements are the following.

\begin{theorem}\label{th-1-2}\quad
Let $(X,G)$ and $\{F_n\}$ be as in Theorem \ref{th-1-1} and $\mu\in M(X,G)$. If $Y\subset X$ and $\mu(Y)=1$, then $h_{\mu}(X,G)\le h^B_{top}(Y, \{F_n\})$.
\end{theorem}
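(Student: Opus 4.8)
The plan is to run the standard ``mass distribution'' (Frostman-type) argument adapted to Følner sequences. Since $h^B_{top}(Y,\{F_n\})=h^B_{top}(Y',\{F_n\})$ whenever $\mu(Y')=1$, it suffices to find, for an arbitrarily small $\eps>0$, a Borel set $Y'$ with $\mu(Y')>0$ (indeed $\mu(Y')$ close to $1$) such that every ``cover'' of $Y'$ by dynamical Bowen balls $B_{F_n}(x,\delta)$ (with the $n$'s allowed to vary, $n\ge N$ for large $N$) has total weight $\sum_i \exp(-s|F_{n_i}|)$ bounded below by a positive constant when $s<h_\mu(X,G)-\eps$; letting $\delta\to 0$ and then $\eps\to0$ then gives $h^B_{top}(Y,\{F_n\})\ge h_\mu(X,G)$. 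First I would fix a finite Borel partition $\xi$ of $X$ with $\operatorname{diam}(\xi)<\delta$ and $\mu(\partial\xi)=0$, so that each atom of the refined partition $\xi_{F_n}=\bigvee_{g\in F_n}g^{-1}\xi$ is essentially contained in one Bowen ball $B_{F_n}(x,\delta)$ and conversely each such Bowen ball meets only a controlled number of atoms of $\xi_{F_n}$; this lets me pass freely between metric covers and partition-atom counts. By the Shannon--McMillan--Breiman theorem for amenable group actions along a tempered Følner sequence (Lindenstrauss), for $\mu$-a.e.\ $x$ one has $-\frac{1}{|F_n|}\log\mu(\xi_{F_n}(x))\to h_\mu(X,\xi,G)$, and choosing $\xi$ fine enough makes $h_\mu(X,\xi,G)>h_\mu(X,G)-\eps$.

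Next I would use Egorov's theorem: there is a set $A$ with $\mu(A)>1-\eps$ and an integer $N$ such that for all $n\ge N$ and all $x\in A$,
\[
\mu\bigl(\xi_{F_n}(x)\bigr)\le \exp\bigl(-|F_n|\,(h_\mu(X,G)-2\eps)\bigr).
\]
Set $s=h_\mu(X,G)-3\eps$. Now take any countable family $\{B_{F_{n_i}}(x_i,\delta)\}_i$ with every $n_i\ge N$ that covers $A$ (this is precisely the type of family appearing in the definition of the Bowen entropy at scale $\delta$, weighted by $\sum_i\exp(-s|F_{n_i}|)$). Replace each $B_{F_{n_i}}(x_i,\delta)$ by the bounded number $C_0=C_0(\xi)$ of atoms of $\xi_{F_{n_i}}$ it can intersect; this yields a cover of $A$ (up to the $\mu$-null set $\bigcup_n\partial\xi_{F_n}$) by partition atoms, each of $\mu$-measure at most $\exp(-|F_{n_i}|(h_\mu(X,G)-2\eps))$ if it meets $A$. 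Summing,
\[
\mu(A)\le \sum_i C_0\,\exp\bigl(-|F_{n_i}|(h_\mu(X,G)-2\eps)\bigr)
= C_0\sum_i \exp\bigl(-|F_{n_i}|(s+\eps)\bigr)\le C_0\sum_i \exp(-s|F_{n_i}|),
\]
using $\exp(-\eps|F_{n_i}|)\le 1$. Hence every such weighted cover of $A$ has weight at least $\mu(A)/C_0>(1-\eps)/C_0>0$, which forces the Bowen entropy (at scale $\delta$, restricted to $A$, and therefore of $Y$) to be at least $s=h_\mu(X,G)-3\eps$. Letting $\delta\to0$ and $\eps\to0$ completes the proof.

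The main obstacle is the combinatorial step of comparing metric Bowen balls $B_{F_n}(x,\delta)$ with atoms of $\xi_{F_n}$ uniformly in $n$: I need a single constant $C_0$, independent of $n$, bounding the number of atoms of $\xi_{F_n}$ that a single $\delta$-Bowen ball can meet. For $\mathbb Z$-actions this follows because a $\delta$-ball in $X$ meets a bounded number of atoms of $\xi$ and one ``tensors'' over coordinates; for a general amenable $G$ the same works atom-by-atom over $g\in F_n$ since $\operatorname{diam}\xi<\delta$ implies $B_{F_n}(x,\delta)$ is contained in a \emph{single} atom of $\xi_{F_n}$ once $\xi$ is chosen with the property that $\delta$-balls are contained in atoms of $\xi$ — so in fact $C_0$ can be taken to be $1$ after a harmless shrinking of $\delta$, or one works with a slightly enlarged partition. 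A secondary technical point is ensuring the SMB convergence and Egorov step are legitimately available along the \emph{tempered} Følner sequence $\{F_n\}$; this is exactly where temperedness (and hence the hypotheses inherited from Theorem \ref{th-1-1}) enters, via Lindenstrauss's pointwise ergodic and SMB theorems for tempered Følner sequences. Note the increasing condition \eqref{eq-1-2} is not even needed for this direction — it is only the reverse inequality (Theorem \ref{th-1-2}'s companion, the generic-points upper bound in result (C)) that will require it.
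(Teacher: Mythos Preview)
Your overall strategy---a Frostman/mass-distribution argument driven by SMB and Egorov---is a natural route, and it is \emph{not} the one the paper takes. The paper instead proves a full non-ergodic Brin--Katok formula (Theorem~\ref{th-A-1}) and then invokes a variational principle from \cite{ZC}, $h^B_{top}(K,\{F_n\})=\sup\{\underline h_\nu^{loc}(\{F_n\}):\nu(K)=1\}$ for compact $K$, applied to an exhaustion of $Y$ by compact sets. So a correct direct argument along your lines would be a genuine alternative. However, there is a real gap in your proposal, precisely at the step you flag as the ``main obstacle,'' and your proposed resolution of it is incorrect.

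The problematic claim is that a Bowen ball $B_{F_n}(x,\delta)$ meets at most $C_0$ atoms of $\xi_{F_n}$ with $C_0$ independent of $n$ (or even $C_0=1$). If a single $\delta$-ball $B(z,\delta)$ can meet $k$ atoms of $\xi$, then coordinate by coordinate one only gets the bound $k^{|F_n|}$ on the number of $\xi_{F_n}$-atoms meeting $B_{F_n}(x,\delta)$; ``tensoring over coordinates'' yields an exponential, not a constant. Your fix ``choose $\xi$ so that every $\delta$-ball is contained in a single atom'' would force every atom of $\xi$ to be open, hence clopen, which is impossible for a nontrivial finite partition of a connected $X$. Consequently the inequality
\[
\mu(A)\le \sum_i C_0\,\exp\bigl(-|F_{n_i}|(h_\mu(X,G)-2\eps)\bigr)
\]
does not follow from SMB alone. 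What is actually needed is an \emph{upper} bound on $\mu\bigl(B_{F_n}(y,2\delta)\bigr)$ for typical $y$, i.e.\ the lower-local-entropy half of Brin--Katok. The paper obtains this in Lemma~\ref{lem-2} via the standard Hamming-ball/Stirling count: for $\mu$-typical $x$ the fraction of $g\in F_n$ with $gx$ in a $\delta$-neighborhood of $\partial\xi$ is $<q$, so $B_{F_n}(x,\delta)$ is contained in the union of at most $\exp(K|F_n|)$ atoms of $\xi_{F_n}$, with $K\to 0$ as $q\to 0$; this, combined with SMB and Borel--Cantelli (here the growth condition $|F_n|/\log n\to\infty$ is used to get summability), gives the needed decay of $\mu(B_{F_n}(x,\delta))$. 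Without this argument your mass-distribution step does not close.

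A secondary issue: your SMB step asserts pointwise convergence to the single number $h_\mu(G,\xi)$, which holds only when $\mu$ is ergodic; for general $\mu\in M(X,G)$ the a.e.\ limit is the fiberwise entropy $h(x,\xi)$, so your Egorov set with the uniform bound $\exp(-|F_n|(h_\mu(X,G)-2\eps))$ need not have measure close to $1$. This can be repaired (e.g.\ by ergodic decomposition, using $\mu_y(Y)=1$ for $\pi$-a.e.\ $y$ and $\operatorname{ess\,sup}_y h_{\mu_y}(X,G)\ge h_\mu(X,G)$), but it should be said. The paper's route through the non-ergodic Brin--Katok formula and the variational principle handles this automatically. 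Finally, your remark that condition~\eqref{eq-1-2} is unnecessary for this direction is at odds with the paper: both the SMB theorem it quotes (Theorem~\ref{th-2-2}) and the Borel--Cantelli step in Lemma~\ref{lem-2} use it.
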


\begin{theorem}\label{th-1-3}\quad
Let $(X,G)$ and $\{F_n\}$ be as in Theorem \ref{th-1-1} and $\mu\in E(X,G)$. Let
\begin{align*}
  G_{\mu}=\{x\in X: \lim_{n\rightarrow\infty}\frac{1}{|F_n|}\sum_{g\in F_n}f(gx)=\int_Xf d\mu, \text{ for any }f\in C(X)\},
\end{align*}
the set of generic points for $\mu$ with respect to $\{F_n\}$,
then $$h^B_{top}(G_{\mu}, \{F_n\})=h_{\mu}(X,G).$$
\end{theorem}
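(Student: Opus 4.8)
The claimed equality splits into the two inequalities $h_\mu(X,G)\le h^B_{top}(G_\mu,\{F_n\})$ and $h^B_{top}(G_\mu,\{F_n\})\le h_\mu(X,G)$, and the first one will come for free. Since $\mu$ is ergodic and $\{F_n\}$ is a tempered F{\o}lner sequence, the pointwise ergodic theorem for amenable group actions gives $\frac1{|F_n|}\sum_{g\in F_n}f(gx)\to\int_X f\,d\mu$ for $\mu$-a.e.\ $x$, first for each $f$ in a fixed countable dense subset of $C(X)$ and then, by density and $\nu_n^x(X)=1$, for every $f\in C(X)$; hence $\mu(G_\mu)=1$, and Theorem \ref{th-1-2} applied with $Y=G_\mu$ yields $h_\mu(X,G)\le h^B_{top}(G_\mu,\{F_n\})$. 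So the real content is the reverse inequality, and the rest of the plan is devoted to it.

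To bound $h^B_{top}(G_\mu,\{F_n\})$ from above, fix $\eps>0$ and $\delta>0$ and choose a finite Borel partition $\xi=\{A_1,\dots,A_k\}$ with $\operatorname{diam}(A_i)<\eps$ and $\mu(\partial A_i)=0$ for all $i$; by $G$-invariance of $\mu$ this gives $\mu(\partial(\xi_F))=0$ for every finite $F\subset G$, where $\xi_F=\bigvee_{g\in F}g^{-1}\xi$, and each atom of $\xi_{F_n}$ lies in a Bowen $(F_n,\eps)$-ball. Fix $m$ with $\frac1{|F_m|}H_\mu(\xi_{F_m})<h_\mu(\xi,G)+\delta/2$, possible because $\frac1{|F_n|}H_\mu(\xi_{F_n})\to h_\mu(\xi,G)$. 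For $x\in G_\mu$ let $N(x)$ be large enough that $|\nu_n^x(Q)-\mu(Q)|<\delta'$ for all $n\ge N(x)$ and all atoms $Q$ of $\xi_{F_m}$ and of $\xi_{F_\ell}$ (the auxiliary scale $\ell$ and the small parameter $\delta'$ are fixed below), where $\nu_n^x=\frac1{|F_n|}\sum_{g\in F_n}\delta_{gx}$; this is legitimate because $\mu(\partial Q)=0$, so weak-$*$ convergence $\nu_n^x\to\mu$ forces $\nu_n^x(Q)\to\mu(Q)$. Writing $G_\mu=\bigcup_{N\ge1}G_\mu(N)$ with $G_\mu(N)=\{x\in G_\mu:N(x)\le N\}$ and using that Bowen topological entropy is countably stable, $h^B_{top}\big(\bigcup_N Y_N,\{F_n\}\big)=\sup_N h^B_{top}(Y_N,\{F_n\})$, the plan reduces to bounding $h^B_{top}(G_\mu(N),\eps)$ for each fixed $N$.

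Fix $N$ and take $n\ge N$ large. Each $x\in G_\mu(N)$ sits in a unique atom $\xi_{F_n}(x)$, on which $\nu_n^x$ is $\delta'$-close to $\mu$ for $\xi_{F_m}$ and $\xi_{F_\ell}$. I would count the atoms of $\xi_{F_n}$ obtained this way as follows. By the Ornstein--Weiss quasi-tiling theorem, fix a tiling of a fraction $1-\eta$ of $F_n$ by $\eta$-disjoint left translates $c_1F_m,\dots,c_MF_m$ (taking $\eta<\delta/(4\log k)$, and allowing finitely many ``large'' shapes if one does not suffice); then $M\approx|F_n|/|F_m|$, and the $\xi_{F_n}$-name of $x$ is determined by the $\xi_{F_m}$-shapes it displays on the $M$ tiles together with its values on the $\le\eta|F_n|$ remaining sites, the latter contributing at most $e^{\eta|F_n|\log k}\le e^{\delta|F_n|/4}$ possibilities. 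On the tiles, the constraint $\nu_n^x\approx\mu$ on $\xi_{F_\ell}$ forces the empirical distribution of the observed $\xi_{F_m}$-shapes to be close to the law of $\xi_{F_m}$ under $\mu$, provided $\delta'$ is small and $1\ll\ell\ll m$; a method-of-types estimate then bounds the number of admissible shape sequences $(Q_1,\dots,Q_M)$ by $e^{M(H_\mu(\xi_{F_m})+\delta|F_m|/4)}\le e^{|F_n|(h_\mu(\xi,G)+\delta)}$ once $n$ is large. Consequently $G_\mu(N)$ is covered by at most $e^{|F_n|(h_\mu(\xi,G)+\delta)}$ atoms of $\xi_{F_n}$, each inside a Bowen $(F_n,\eps)$-ball, so for $s=h_\mu(\xi,G)+\delta$ the associated Carath\'eodory sum is $\le1$; letting $n\to\infty$ --- legitimate because the increasing condition $|F_n|/\log n\to\infty$ controls the Carath\'eodory construction just as in the proof of Theorem \ref{th-1-1} --- gives $h^B_{top}(G_\mu(N),\eps)\le h_\mu(\xi,G)+\delta\le h_\mu(X,G)+\delta$. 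Taking the supremum over $N$, then letting $\delta\to0$ and $\eps\to0$, would finish the proof.

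The hard part will be the counting step just described: since genericity of $x$ only controls the averages along $F_n$ of \emph{continuous} functions, one must argue carefully that it nonetheless pins down the $\xi_{F_m}$-window statistics of the $\xi_{F_n}$-name of $x$ finely enough to force the method-of-types bound, which requires careful bookkeeping of the Ornstein--Weiss quasi-tiling (the several-shapes version and the left/right translation conventions), of the three scales $\ell\ll m\ll n$ together with the parameters $\eta,\delta'$, and of the ``garbage'' terms. An alternative to the method-of-types step is to invoke the Shannon--McMillan--Breiman theorem of Lindenstrauss for tempered F{\o}lner sequences together with an Egorov-type uniformization of the convergence; but this would still need a supplementary argument at the generic points where the SMB convergence fails, whereas the quasi-tiling argument handles all generic points uniformly.
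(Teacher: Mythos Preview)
Your lower bound matches the paper exactly: ergodicity plus the pointwise ergodic theorem give $\mu(G_\mu)=1$, and then Theorem \ref{th-1-2} (Corollary \ref{coro-3-2}) applies.

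For the upper bound, the paper takes a genuinely different and considerably shorter route, following Pfister--Sullivan. It fixes a decreasing sequence of closed \emph{convex} neighborhoods $K_m$ of $\mu$ in $M(X)$, sets $A_{n,m}=\{x:\tfrac1{|F_n|}\sum_{g\in F_n}\delta_x\circ g^{-1}\in K_m\}$, and proves the single claim
\[
\lim_{\varepsilon\to0}\lim_{m\to\infty}\limsup_{n\to\infty}\frac{1}{|F_n|}\log N(A_{n,m},n,\varepsilon)\le h_\mu(X,G)
\]
by a Misiurewicz-type argument: for a maximal $(F_n,\varepsilon)$-separated set $E_n\subset A_{n,m(n)}$, form $\sigma_n=\tfrac1{\#E_n}\sum_{x\in E_n}\delta_x$ and $\mu_n=\tfrac1{|F_n|}\sum_{g\in F_n}\sigma_n\circ g^{-1}$; convexity of $K_m$ forces $\mu_n\in K_{m(n)}\to\mu$, and the multi-subadditivity of $H_{\sigma_n}(\beta^{\bullet})$ (Lemma 3.1(3) of \cite{HYZ}) together with $\mu(\partial\beta)=0$ gives $\limsup_n\tfrac1{|F_n|}\log\#E_n\le\tfrac1{|F|}H_\mu(\beta^F)$ for every finite $F$. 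No quasi-tiling, no method of types, no level decomposition of $G_\mu$ is needed; the growth condition $|F_n|/\log n\to\infty$ is then used once, to sum $\sum_{n\ge N}e^{-\delta|F_n|}$.

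Your plan is in a reasonable direction but the counting step, as you sketch it, has a gap you should be aware of. Knowing that the \emph{global} $F_m$-pattern frequencies $\nu_n^x(Q)$ (averaged over all $g\in F_n$) are close to $\mu(Q)$ does \emph{not} force the empirical distribution of the tile patterns $(\xi_{F_m}(c_ix))_{i\le M}$ over the fixed tile centers $c_1,\dots,c_M$ to be close to the law of $\xi_{F_m}$ under $\mu$; the $c_i$'s are a sparse deterministic subset of $F_n$, and the overlapping windows at the remaining $g$'s can absorb large discrepancies. The auxiliary scale $\ell$ with $1\ll\ell\ll m$ does not repair this, since coarser statistics control finer ones, not the reverse. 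One can salvage a combinatorial count of $\xi_{F_n}$-names with prescribed global $F_m$-block statistics (this is essentially the amenable ``empirical entropy'' estimate), but the argument is different from the one you outline and is itself nontrivial for general amenable $G$. The paper's Pfister--Sullivan route buys you exactly the avoidance of this bookkeeping: the separated-set/averaging trick converts the counting problem into the standard upper-semicontinuity step $\limsup_n\tfrac1{|F_n|}H_{\sigma_n}(\beta^{F_n})\le h_\mu(G,\beta)$, which needs only the F{\o}lner property and subadditivity.
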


We remark here that $G_{\mu}$ depends on the choice of the F{\o}lner sequence $\{F_n\}$ and $G_{\mu}$ may be an empty set when $\mu$ is non-ergodic.
We also note that for the proof of Theorem \ref{th-1-2}, we use a non-ergodic version of Brin-Katok's entropy formula (Theorem \ref{th-A-1}) and a variational principle
for Bowen topological entropy in \cite{ZC}. Theorem \ref{th-1-2} also gives a lower bound for $h^B_{top}(G_{\mu}, \{F_n\})$ in Theorem \ref{th-1-3}.
For the upper bound, we employ the ideas of Pfister and Sullivan \cite{PS}.

\section{Preliminaries}
\subsection{Topological entropy}

Let $\mathcal {U}$ be an open cover of $X$ and let $N(\mathcal{U})$ denote the minimal cardinality of subcovers of $\mathcal{U}$. The topological entropy of $\mathcal{U}$ is
$$h_{top}(G,\mathcal {U})=\lim_{n\rightarrow+\infty}\frac{1}{|F_n|}\log N\big(\mathcal{U}^{F_n}\big),$$
where $\mathcal{U}^{F_n}=\bigvee_{g\in F_n}g^{-1}\mathcal{U}$.
It is shown that $h_{top}(G,\mathcal {U})$ does not depend on the choice of the F{\o}lner sequences $\{F_n\}$ (see \cite{HYZ}).
The {\it topological entropy} of $(X,G)$ is then defined by
$$h_{top}(X,G)=\sup_{\mathcal U} h_{top}(G,\mathcal {U}),$$
where the supremum is taken over all the open covers of $X$.

\subsection{Bowen topological entropy}
For a finite subset $F$ in $G$, $\varepsilon>0$ and a point $x$ in $X$, we denote the {\it Bowen ball} associated to $F$ with center $x$ and radius $\varepsilon$ by
\begin{align*}
B_{F}(x,\varepsilon)&=\{y\in X: d_{F}(x,y)<\varepsilon\}\\
&=\{y\in X: d(gx,gy)<\varepsilon, \text{ for any }g\in F\}.
\end{align*}

For $Z\subseteq X, s\ge0, N\in\mathbf{N}$, $\{F_n\}$ a F{\o}lner sequence in $G$ and $\varepsilon> 0$, define
$$\mathcal{M}(Z,N,\varepsilon,s,\{F_n\}) = \inf\sum_i\exp(-s|F_{n_i}|),$$
where the infimum is taken over all finite or countable families $\{B_{F_{n_i}}(x_i,\varepsilon)\}$ such that
$x_i\in X, n_i\ge N$ and $\bigcup_i B_{F_{n_i}}(x_i,\varepsilon)\supseteq Z$. The quantity $\mathcal{M}(Z,N,\varepsilon,s,\{F_n\})$
does not decrease as $N$ increases and $\varepsilon$ decreases, hence the following limits exist:
$$\mathcal{M}(Z,\varepsilon,s,\{F_n\})=\lim_{N\rightarrow+\infty}\mathcal{M}(Z,N,\varepsilon,s,\{F_n\}),\mathcal{M}(Z,s,\{F_n\})=\lim_{\varepsilon\rightarrow0}\mathcal{M}(Z,\varepsilon,s,\{F_n\}).$$
The {\it Bowen topological entropy} $h^B_{top}(Z,\{F_n\})$ is then defined as the critical value
of the parameter $s$, where $\mathcal{M}(Z,s,\{F_n\})$ jumps from $+\infty$ to $0$, i.e.,
\begin{equation*}
\mathcal{M}(Z,s,\{F_n\})=\begin{cases}
                             0, s > h^B_{top}(Z,\{F_n\}),\\
                             +\infty, s < h^B_{top}(Z,\{F_n\}).
                        \end{cases}
\end{equation*}

From the above definition, it is easy to check that for each $s\ge 0$, $\mathcal{M}(\cdot,s,\{F_n\})$ is an outer measure on $X$.
Then Bowen topological entropy satisfies the following properties.
\begin{proposition}\label{prop-2-1}\quad
  \begin{enumerate}
    \item If $Z_1\subset Z_2\subset X$, then $h^B_{top}(Z_1,\{F_n\})\le h^B_{top}(Z_2,\{F_n\})$.
    \item If $Y_i\subset X$ for $i=1,2,\dots$, then $h^B_{top}(\bigcup_{i=1}^{\infty}Y_i,\{F_n\})=\sup_i h^B_{top}(Y_i,\{F_n\})$.
  \end{enumerate}
\end{proposition}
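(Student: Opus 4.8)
The final statement to prove is Proposition \ref{prop-2-1}, which asserts two basic properties of Bowen topological entropy: monotonicity under set inclusion, and countable stability (the entropy of a countable union equals the supremum of the entropies). Let me plan a direct proof of these two facts.

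\medskip

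The plan is to derive both properties from the fact, already noted in the excerpt, that for each fixed $s\ge 0$ the set function $\mathcal{M}(\cdot,s,\{F_n\})$ is an outer measure on $X$, together with the characterization of $h^B_{top}(Z,\{F_n\})$ as the critical value of $s$ at which $\mathcal{M}(Z,s,\{F_n\})$ jumps from $+\infty$ to $0$. For monotonicity, I would fix $s\ge 0$ and observe that whenever $Z_1\subset Z_2$, any admissible cover of $Z_2$ by Bowen balls $\{B_{F_{n_i}}(x_i,\eps)\}$ with $n_i\ge N$ is automatically a cover of $Z_1$; hence the infimum defining $\mathcal{M}(Z_1,N,\eps,s,\{F_n\})$ is taken over a larger (or equal) family of admissible covers, giving $\mathcal{M}(Z_1,N,\eps,s,\{F_n\})\le \mathcal{M}(Z_2,N,\eps,s,\{F_n\})$. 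Letting $N\to\infty$ and then $\eps\to 0$ preserves this inequality, so $\mathcal{M}(Z_1,s,\{F_n\})\le\mathcal{M}(Z_2,s,\{F_n\})$ for every $s$. Equivalently, this is just the monotonicity of an outer measure. From the critical-value description it follows at once that if $s>h^B_{top}(Z_2,\{F_n\})$ then $\mathcal{M}(Z_2,s,\{F_n\})=0$, forcing $\mathcal{M}(Z_1,s,\{F_n\})=0$ and hence $s\ge h^B_{top}(Z_1,\{F_n\})$; taking the infimum over such $s$ yields $h^B_{top}(Z_1,\{F_n\})\le h^B_{top}(Z_2,\{F_n\})$.

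\medskip

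For the second property I would prove the two inequalities separately. The inequality $h^B_{top}(\bigcup_i Y_i,\{F_n\})\ge\sup_i h^B_{top}(Y_i,\{F_n\})$ is immediate from part (1), since each $Y_i\subset\bigcup_i Y_i$. For the reverse inequality, write $Y=\bigcup_{i=1}^\infty Y_i$ and let $s>\sup_i h^B_{top}(Y_i,\{F_n\})$; then $s>h^B_{top}(Y_i,\{F_n\})$ for every $i$, so $\mathcal{M}(Y_i,s,\{F_n\})=0$ for each $i$. Using countable subadditivity of the outer measure $\mathcal{M}(\cdot,s,\{F_n\})$, I would conclude
\begin{align*}
\mathcal{M}(Y,s,\{F_n\})\le\sum_{i=1}^{\infty}\mathcal{M}(Y_i,s,\{F_n\})=0,
\end{align*}
so that $\mathcal{M}(Y,s,\{F_n\})=0$ and hence $h^B_{top}(Y,\{F_n\})\le s$. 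Since this holds for every $s>\sup_i h^B_{top}(Y_i,\{F_n\})$, I obtain $h^B_{top}(Y,\{F_n\})\le\sup_i h^B_{top}(Y_i,\{F_n\})$, completing the equality.

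\medskip

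The only genuine content beyond bookkeeping is the countable subadditivity used above, i.e.\ the claim from the excerpt that $\mathcal{M}(\cdot,s,\{F_n\})$ really is an outer measure. If I did not wish to take that assertion for granted, I would verify it directly at the level of the pre-measures $\mathcal{M}(\cdot,N,\eps,s,\{F_n\})$: given countably many sets $Y_i$ and a target $\delta>0$, for each $i$ choose an admissible family of Bowen balls covering $Y_i$ whose weighted sum $\sum\exp(-s|F_{n_j}|)$ is within $\delta 2^{-i}$ of $\mathcal{M}(Y_i,N,\eps,s,\{F_n\})$; the union of these families is an admissible family covering $\bigcup_i Y_i$, and summing the weights gives subadditivity up to $\delta$, after which $\delta\to 0$, then $N\to\infty$ and $\eps\to 0$. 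I expect this verification — really just the standard construction showing that a Carathéodory-type outer measure is countably subadditive — to be the main (and only mildly technical) obstacle; the monotonicity and the passage between $\mathcal{M}$ and the critical value $h^B_{top}$ are routine once the critical-value characterization is in hand.
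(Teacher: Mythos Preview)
Your proposal is correct and follows exactly the approach indicated in the paper: the authors do not give a detailed proof but simply remark that $\mathcal{M}(\cdot,s,\{F_n\})$ is an outer measure, from which both properties follow by the critical-value characterization. Your write-up just makes explicit the routine monotonicity and countable-subadditivity arguments that the paper leaves to the reader.
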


\subsection{Measure-theoretic entropy}

Let $(X,G,\mu)$ be a $G$-measurable dynamical system where $(X,\mathcal{B},\mu)$ is a probability space and $G$ a group that acts in a measure preserving fashion on $(X,\mathcal{B},\mu)$.
Let $\mathcal {P}$ be a finite measurable partition of $X$. For a finite subset $F$ in $G$, we denote by $\mathcal{P}^F=\bigvee_{g\in F}g^{-1}\mathcal{P}$.
When $G$ is a countable discrete amenable group, the measure-theoretic entropy of $\mathcal {P}$ (with respect to $\mu$) is defined by
$$h_{\mu}(G,\mathcal{P})=\lim_{n\rightarrow+\infty}\frac{1}{|F_n|}H_{\mu}(\mathcal{P}^{F_n}),$$
where $\{F_n\}$ is any F{\o}lner sequence in $G$ and the definition is independent of the specific F{\o}lner sequence $\{F_n\}$ (see, for example, \cite{OW2}).
The {\it measure-theoretic entropy} of the system $(X,G,\mu)$, $h_{\mu}(X,G)$, is the supremum of $h_{\mu}(G,\mathcal{P})$ over $\mathcal {P}$.

Consider the $\sigma-$algbra $\mathcal{I}_{\mu}=\{A\in \mathcal{B}:\mu(A\triangle g^{-1}A)=0, \forall g\in G\}$.
Let $p:X\rightarrow X/\mathcal{I}_{\mu}=Y$ be the associated projection and $\mu=\int_Y \mu_yd \pi(y)$ be the decomposition of $\mu$ over $Y$.
Such a decomposition is called the {\it ergodic decomposition} of $\mu$, since for each $y\in Y$,
$p^{-1}(y)$ is $G$-invariant and $(p^{-1}(y),G,\mu_y)$ is a $G$-ergodic measurable dynamical system.

For a measurable partition $\mathcal{P}$ and $x\in X$, denote by $\mathcal{P}(x)$ the element in $\mathcal{P}$ which $x$ belongs to.
The following is the non-ergodic version of Shannon-McMillan-Breiman theorem for amenable group actions. For the ergodic case, one may also see \cite{L,OW}.

\begin{theorem}[SMB Theorem, Theorem 6.2 of \cite{W}]\label{th-2-2}\quad
Let $(X,G,\mu)$ be a $G$-measure preserving system and $G$ a countable discrete amenable group. Then for any tempered F{\o}lner sequence $\{F_n\}$ in $G$
 with the increasing condition \eqref{eq-1-2} and any finite measurable partition $\mathcal{P}$ one has that for $\mu$-a.e. $x\in X$,
 $$\lim_{n\rightarrow +\infty}-\frac{1}{|F_n|}\log \mu(\mathcal{P}^{F_n}(x))=h_{\mu_y}(G,\mathcal{P}|p^{-1}(y))\triangleq h(x,\mathcal{P}),$$
 where $y\in Y$ such that $p^{-1}(y)$ is the ergodic component containing $x$ and $$\int_Xh(x,\mathcal{P})d\mu(x)=h_{\mu}(G,\mathcal{P}).$$
\end{theorem}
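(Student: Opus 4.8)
The plan is to prove the stated almost-everywhere convergence by first decomposing $\mu$ over its ergodic components and then treating the ergodic case, being careful that the decomposition itself carries content. Write $I_{F_n}(x):=-\log\mu(\mathcal P^{F_n}(x))$, so the claim is $\frac1{|F_n|}I_{F_n}(x)\to h(x,\mathcal P)$ for $\mu$-a.e.\ $x$. Since $h(x,\mathcal P)=h_{\mu_y}(G,\mathcal P|p^{-1}(y))$ is by construction constant on each ergodic fibre $p^{-1}(y)$, it is $\mathcal I_\mu$-measurable, and the integral identity $\int_X h(x,\mathcal P)\,d\mu=h_\mu(G,\mathcal P)$ is the additive form of the ergodic decomposition of entropy, which I would record at the outset. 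The essential observation is that $I_{F_n}$ is built from the \emph{global} measure $\mu$, whereas the target is the \emph{fibre} entropy; thus the problem splits into (i) an ergodic Shannon--McMillan--Breiman statement for the fibre measure $\mu_y$, namely $-\frac1{|F_n|}\log\mu_y(\mathcal P^{F_n}(x))\to h_{\mu_y}$, and (ii) the transfer statement $-\frac1{|F_n|}\log\frac{\mu(\mathcal P^{F_n}(x))}{\mu_y(\mathcal P^{F_n}(x))}\to 0$ relating the global and fibre measures of the atom to exponential order.

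For the ergodic step (i), fix an ergodic $\mu_y$. The candidate limit is pinned down by $\int\frac1{|F_n|}I_{F_n}\,d\mu_y=\frac1{|F_n|}H_{\mu_y}(\mathcal P^{F_n})\to h_{\mu_y}$, the Ornstein--Weiss convergence underlying the definition of entropy. To promote this to pointwise convergence I would use a two-sided large-deviation estimate: applying Markov's inequality to $\E_{\mu_y}[\mu_y(\mathcal P^{F_n})^{\mp t}]=\sum_{P\in\mathcal P^{F_n}}\mu_y(P)^{1\mp t}$ yields $\mu_y\{\frac1{|F_n|}I_{F_n}>h_{\mu_y}+\delta\}\le\exp\big(|F_n|\big[\tfrac1{|F_n|}\log\sum_P\mu_y(P)^{1-t}-t(h_{\mu_y}+\delta)\big]\big)$ and a symmetric bound for the lower tail. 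The exponents are governed by the R\'enyi entropy rates $\frac1{|F_n|}\log\sum_P\mu_y(P)^{1\mp t}$, which converge by Ornstein--Weiss subadditivity; each is convex in $t$, vanishes at $t=0$, and has derivative $\frac1{|F_n|}H_{\mu_y}(\mathcal P^{F_n})\to h_{\mu_y}$ there, so in the ergodic case (where the second-order term stays bounded) one may fix a small $t=t(\delta)$ making both exponents negative, giving $\le e^{-c(\delta)|F_n|}$. Here the increasing condition \eqref{eq-1-2} is exactly what delivers summability: since $|F_n|/\log n\to\infty$, for every $c>0$ one has $e^{-c|F_n|}\le n^{-2}$ eventually, so Borel--Cantelli forces a.e.\ $x$ out of all but finitely many deviation events and yields convergence along the full sequence $\{F_n\}$ rather than along a subsequence.

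For the non-ergodic transfer (ii), and this is where the tempered hypothesis is essential, I would express $\mu(\mathcal P^{F_n}(x))=\int_Y\mu_{y'}(\mathcal P^{F_n}(x))\,d\pi(y')$ and compare it with $\mu_y(\mathcal P^{F_n}(x))$ via the conditional measures over $\mathcal I_\mu$. Lindenstrauss' pointwise ergodic theorem, valid precisely for \emph{tempered} F{\o}lner sequences (condition \eqref{tempered}), together with the accompanying Vitali-type maximal/covering inequality, would be used to control the conditional densities $\frac{d\mu|_{\mathcal P^{F_n}}}{d\mu_y|_{\mathcal P^{F_n}}}$ and to show their logarithms are $o(|F_n|)$ $\mu$-a.e.; a measurable-selection/Fubini argument then patches the fibrewise statements into a single a.e.\ statement on $X$, and combining (i) and (ii) gives $\frac1{|F_n|}I_{F_n}\to h(\cdot,\mathcal P)$.

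I expect the principal obstacle to be the non-ergodic transfer (ii). The point is that, in the non-ergodic case, the variance of $I_{F_n}$ under $\mu$ is of order $|F_n|^2$ (it is dominated by the between-fibre spread of $h(\cdot,\mathcal P)$), so $\frac1{|F_n|}I_{F_n}$ does \emph{not} concentrate globally and the large-deviation argument of step (i) cannot be applied directly to $\mu$; one is genuinely forced to localize to ergodic fibres and then reconcile the global and fibre normalizations. Unlike the $\mathbb Z$-case there is no canonical past along which to telescope the information function, so the classical martingale argument of Breiman is unavailable and the concentration must instead be extracted from the R\'enyi rates uniformly in $t$ and $n$. The two hypotheses are used in tandem and neither can be dropped: \eqref{tempered} supplies the maximal/ergodic control that enables the localization and the comparison of $\mu$ with $\mu_y$, while \eqref{eq-1-2} supplies the Borel--Cantelli summability that converts exponential deviation bounds into almost-everywhere convergence along $\{F_n\}$.
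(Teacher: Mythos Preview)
The paper does not prove this statement. Theorem~\ref{th-2-2} is explicitly attributed as ``Theorem 6.2 of \cite{W}'' (Weiss, \emph{Actions of amenable groups}) and is stated without proof; it is invoked as a black box in Section~3 for the proof of the Brin--Katok formula. There is therefore no proof in the paper against which to compare your proposal.

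On the proposal itself: the overall architecture---reduce to ergodic fibres, prove a pointwise SMB there, then transfer from $\mu_y$ back to $\mu$---is reasonable, but step~(i) has a genuine gap. You assert that the R\'enyi rates $\frac{1}{|F_n|}\log\sum_{P\in\mathcal P^{F_n}}\mu_y(P)^{1\mp t}$ converge ``by Ornstein--Weiss subadditivity'' and that a small $t=t(\delta)$ renders the deviation exponent negative. Subadditivity is available only on the $1-t$ side; more seriously, the ``small $t$ works'' conclusion needs a bound on the second derivative at $t=0$ that is uniform in $n$, i.e.\ control of $\frac{1}{|F_n|}\mathrm{Var}_{\mu_y}(I_{F_n})$, and you have not supplied this---establishing it is essentially as hard as the theorem. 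The standard proofs of the amenable SMB theorem (Ornstein--Weiss \cite{OW}, Lindenstrauss \cite{L}, Weiss \cite{W}) avoid moment generating functions entirely and proceed instead through quasi-tiling and the associated covering lemmas; that is where the tempered hypothesis \eqref{tempered} actually enters, not merely in a maximal inequality for step~(ii). Your step~(ii) is also only a sketch: the assertion that $\log\bigl(\mu(\mathcal P^{F_n}(x))/\mu_y(\mathcal P^{F_n}(x))\bigr)=o(|F_n|)$ is the right target, but it does not follow from Lindenstrauss' pointwise ergodic theorem in the way indicated, since the objects being compared are measures of shrinking atoms rather than ergodic averages of a fixed $L^1$ function.
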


\section{Brin-Katok's entropy formula for non-ergodic case}
  In this section, we will prove Brin-Katok's entropy formula \cite{BK} for amenable group action dynamical systems. The statement of this formula is the following.
\begin{theorem}[Brin-Katok's entropy formula: non-ergodic case]\label{th-A-1}
Let $(X,G)$ be a compact metric $G-$action topological dynamical system and $G$ a countable discrete amenable group.
Let $\mu\in M(X,G)$ and $\{F_n\}$ a tempered F{\o}lner sequence in $G$
 with the increasing condition \eqref{eq-1-2}, then for $\mu$ almost every $x\in X$,
\begin{align*}
&\lim_{\delta\rightarrow 0}\liminf_{n\rightarrow+\infty}-\frac{1}{|F_n|}\log \mu(B_{F_n}(x,\delta)) \\
=&\lim_{\delta\rightarrow 0}\limsup_{n\rightarrow+\infty}-\frac{1}{|F_n|}\log \mu(B_{F_n}(x,\delta))\triangleq h_{\mu}(x),
\end{align*}
where $h_{\mu}(x)$ is a $G-$invariant measurable function such that $\int_X h_{\mu}(x)d\mu=h_{\mu}(X,G)$.
\end{theorem}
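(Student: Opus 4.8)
The plan is to show that, for $\mu$-a.e.\ $x$, both one-sided limits in the statement coincide with $h_\mu(x):=\sup_j h(x,\mathcal P_j)$, where $\mathcal P_1\le\mathcal P_2\le\cdots$ is a fixed refining sequence of finite partitions with $\mathrm{diam}\,\mathcal P_j\to0$, $\mu(\partial\mathcal P_j)=0$ and $\bigvee_j\mathcal P_j$ generating, and $h(x,\mathcal P_j)$ is the local entropy function of Theorem~\ref{th-2-2}. Since for each fixed $x$ the inner $\limsup$ and $\liminf$ are monotone in $\delta$, the two $\delta$-limits exist, and as the $\limsup$-limit dominates the $\liminf$-limit it suffices to prove
\[
\lim_{\delta\to0}\limsup_{n}-\tfrac1{|F_n|}\log\mu(B_{F_n}(x,\delta))\ \le\ h_\mu(x)\ \le\ \lim_{\delta\to0}\liminf_{n}-\tfrac1{|F_n|}\log\mu(B_{F_n}(x,\delta))\qquad\mu\text{-a.e.}
\]
Granting this, $h_\mu$ is $G$-invariant as an increasing limit of the $G$-invariant functions $h(\cdot,\mathcal P_j)$ (each depends only on the ergodic component of $x$), and $\int h_\mu\,d\mu=\lim_j\int h(x,\mathcal P_j)\,d\mu=\lim_j h_\mu(G,\mathcal P_j)=h_\mu(X,G)$ by monotone convergence and the Kolmogorov--Sinai theorem.

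The upper inequality is routine: whenever $\mathrm{diam}\,\mathcal P_j<\delta$ we have $\mathcal P_j^{F_n}(x)\subseteq B_{F_n}(x,\delta)$, so $-\tfrac1{|F_n|}\log\mu(B_{F_n}(x,\delta))\le-\tfrac1{|F_n|}\log\mu(\mathcal P_j^{F_n}(x))$, and Theorem~\ref{th-2-2} gives $\limsup_n-\tfrac1{|F_n|}\log\mu(B_{F_n}(x,\delta))\le h(x,\mathcal P_j)\le h_\mu(x)$ for $\mu$-a.e.\ $x$; then let $\delta\to0$.

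For the lower inequality I would adapt Brin--Katok's interior-set device. Fix $j$, write $\mathcal P=\mathcal P_j$, and for $\delta>0$ set $C_\delta=\{x:B(x,\delta)\subseteq\mathcal P(x)\}$, so $\mu(X\setminus C_\delta)\to0$ as $\delta\to0$ since $\mu(\partial\mathcal P)=0$. Put $T_n(x)=\{g\in F_n:gx\notin C_\delta\}$. If $y\in B_{F_n}(x,\delta)$, then for $g\in F_n\setminus T_n(x)$ we get $gy\in B(gx,\delta)\subseteq\mathcal P(gx)$, hence $\mathcal P(gy)=\mathcal P(gx)$; thus $B_{F_n}(x,\delta)\subseteq\mathcal P^{F_n\setminus T_n(x)}(x)$, and the chain rule for the information function (using $\mathcal P^{F_n}=\mathcal P^{F_n\setminus T_n(x)}\vee\mathcal P^{T_n(x)}$) gives
\[
-\tfrac1{|F_n|}\log\mu(B_{F_n}(x,\delta))\ \ge\ -\tfrac1{|F_n|}\log\mu(\mathcal P^{F_n}(x))\ -\ \tfrac1{|F_n|}\,I_{\mathcal P^{T_n(x)}\mid\mathcal P^{F_n\setminus T_n(x)}}(x),
\]
with $I_{\mathcal Q\mid\mathcal R}$ the conditional information function. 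By Theorem~\ref{th-2-2} the first term on the right tends to $h(x,\mathcal P)$ for $\mu$-a.e.\ $x$, so it remains to show the last term is eventually small. For this I would use: (i) the pointwise ergodic theorem for amenable group actions along tempered F{\o}lner sequences (cf.\ \cite{L}), which shows that off a set of $\mu$-measure $\le\eta^{-1}\mu(X\setminus C_\delta)$ one has $|T_n(x)|\le2\eta|F_n|$ for all large $n$; and (ii) the Markov-type inequality $\mu\big(\{x:I_{\mathcal P^{T}\mid\mathcal P^{F_n\setminus T}}(x)>t\}\big)\le|\mathcal P|^{|T|}e^{-t}$, valid for all $T\subseteq F_n$ and $t>0$. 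Summing (ii) over all $T\subseteq F_n$ with $|T|\le2\eta|F_n|$ at level $t=\kappa|F_n|$ produces a bound of order $\exp\big((H(2\eta)+2\eta\log|\mathcal P|-\kappa)|F_n|+o(|F_n|)\big)$, $H$ the binary entropy function; choosing $\kappa$ slightly larger than $H(2\eta)+2\eta\log|\mathcal P|$ makes the exponent negative, and the growth condition \eqref{eq-1-2} makes $\sum_n\exp(-c|F_n|)$ converge. A Borel--Cantelli argument then yields $I_{\mathcal P^{T_n(x)}\mid\mathcal P^{F_n\setminus T_n(x)}}(x)\le\kappa|F_n|$ eventually for $\mu$-a.e.\ $x$ off a small set, whence $\liminf_n-\tfrac1{|F_n|}\log\mu(B_{F_n}(x,\delta))\ge h(x,\mathcal P)-\kappa$ there. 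Letting $\delta\to0$ along a countable sequence removes the exceptional set, then $\eta\to0$ gives $\lim_{\delta\to0}\liminf_n(\cdots)\ge h(x,\mathcal P_j)$ for $\mu$-a.e.\ $x$, and finally $j\to\infty$ with the Kolmogorov--Sinai theorem applied in each ergodic component gives the bound $\ge h_\mu(x)$.

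I expect the main obstacle to be step (ii) and its coupling with the ergodic theorem: the set $F_n\setminus T_n(x)$ is not a F{\o}lner set and genuinely depends on $x$, so SMB cannot be applied to it directly; the remedy is to absorb the ``defect'' $I_{\mathcal P^{T_n(x)}\mid\mathcal P^{F_n\setminus T_n(x)}}$ by a union bound over the at most $\sum_{m\le2\eta|F_n|}\binom{|F_n|}{m}|\mathcal P|^{m}$ possible configurations of $T_n(x)$ --- exponentially many, but with an exponent that shrinks with $\eta$ --- and to upgrade the resulting in-measure estimates to almost-everywhere ones via Borel--Cantelli. It is precisely here that the temperedness of $\{F_n\}$ (needed for Theorem~\ref{th-2-2} and the pointwise ergodic theorem) and the growth condition \eqref{eq-1-2} (needed so that $\sum_n\exp(-c|F_n|)<\infty$) enter in an essential way, and I would not expect either hypothesis to be removable by this method.
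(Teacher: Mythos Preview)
Your proposal is correct and follows the same broad Brin--Katok template as the paper (easy upper bound via $\mathcal P^{F_n}(x)\subseteq B_{F_n}(x,\delta)$; lower bound by controlling the ``bad times'' where the orbit falls near $\partial\mathcal P$, then Borel--Cantelli using the growth condition \eqref{eq-1-2}), but the implementation of the lower bound is genuinely different. The paper works with the ergodic decomposition explicitly: it stratifies $X$ into level sets $A_l=p^{-1}(h^{-1}([l\gamma,(l+1)\gamma)))$ of the fiber entropy, uses Egorov's theorem to pass from SMB convergence to a uniform lower bound on $-\tfrac{1}{|F_n|}\log\mu(\eta^{F_n}(x))$ on a large set, and then bounds $\mu(B_{F_n}(x,\delta))$ combinatorially by covering the Bowen ball with the at most $\exp(K|F_n|)$ atoms of $\eta^{F_n}$ whose names lie in a Hamming ball around $w_{\eta,F_n}(x)$; the infinite-entropy fibers are treated as a separate case. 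You instead fix a refining generating sequence $\{\mathcal P_j\}$, define the target function directly as $h_\mu(x)=\sup_j h(x,\mathcal P_j)$, and replace the Hamming-ball counting by the chain rule $-\log\mu(\mathcal P^{F_n\setminus T}(x))=-\log\mu(\mathcal P^{F_n}(x))-I_{\mathcal P^{T}\mid\mathcal P^{F_n\setminus T}}(x)$ together with the maximal-type bound $\mu\{I_{\mathcal P^{T}\mid\mathcal P^{F_n\setminus T}}>t\}\le|\mathcal P|^{|T|}e^{-t}$ and a union bound over admissible $T$. Your route avoids the level-set stratification, Egorov's theorem, and the separate treatment of infinite-entropy fibers (these are absorbed automatically by $\sup_j$), at the cost of invoking the conditional-information machinery and the fiberwise Kolmogorov--Sinai theorem; the paper's route is more elementary in that it stays purely combinatorial, but is organisationally heavier. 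The two arguments are essentially equivalent in strength: the Stirling estimate the paper uses and the $H(2\eta)+2\eta\log|\mathcal P|$ exponent you obtain encode the same counting, and both need temperedness (for SMB and the pointwise ergodic theorem) and \eqref{eq-1-2} (for Borel--Cantelli) in exactly the places you identify.
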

For the proof, we follow the proof originally due to Brin and Katok \cite{BK} for $\mathbb Z$-actions.

Let $\mu=\int_Y \mu_yd \pi(y)$ be the $G-$ergodic decomposition of $\mu$ and $p:X\rightarrow Y$ be the associated projection.
For each $y\in Y$, let $h(y)=h_{\mu_y}(p^{-1}(y),G)$ be the measure-theoretic entropy restricted to the system $(p^{-1}(y),G,\mu_y)$.
For any $M>0$, denote by $X_M=p^{-1}(h^{-1}([0,M)))$ and $X_M'=p^{-1}(h^{-1}([M,\infty)))$. Let $X_{\infty}=p^{-1}(h^{-1}(\infty))$.
Then $X=X_M\bigcup X_M'\bigcup X_{\infty}$.

\begin{lemma}\label{lem-2}\quad
\begin{enumerate}
  \item For any $M>0$,
  $$\int_{X_M} \lim\limits_{\delta\rightarrow 0}\liminf\limits _{n\rightarrow+\infty}-\frac{1}{|F_n|}\log \mu(B_{F_n}(x,\delta))d\mu \ge \int_{h^{-1}([0,M))}h(y)d\pi(y).$$
  \item For $\mu$ almost every $x\in X_{\infty}$, $$\lim\limits_{\delta\rightarrow 0}\liminf\limits _{n\rightarrow+\infty}-\frac{1}{|F_n|}\log \mu(B_{F_n}(x,\delta))=\infty.$$
\end{enumerate}

\end{lemma}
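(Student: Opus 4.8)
My plan is to derive both parts from a single local lower bound tied to the Shannon--McMillan--Breiman theorem (Theorem \ref{th-2-2}), run on $X_M$ and $X_\infty$ in turn. Fix a finite Borel partition $\mathcal{P}=\{P_1,\dots,P_r\}$ with $\mu(\partial\mathcal{P})=0$, and for $\delta>0$ put $\partial_\delta\mathcal{P}=\{z\in X:B(z,\delta)\not\subseteq\mathcal{P}(z)\}$, the $\delta$-collar of the boundary, so $\mu(\partial_\delta\mathcal{P})\downarrow\mu(\partial\mathcal{P})=0$ as $\delta\downarrow0$. The geometric core is a covering inclusion in the direction \emph{opposite} to the one used for the upper bound: call $g\in F_n$ a good time for $x$ if $gx\notin\partial_\delta\mathcal{P}$, and set $R_n(x,\delta)=\{g\in F_n:gx\in\partial_\delta\mathcal{P}\}$, $G_n=F_n\setminus R_n(x,\delta)$. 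If $y\in B_{F_n}(x,\delta)$ and $g\in G_n$, then $d(gx,gy)<\delta$ forces $gy\in\mathcal{P}(gx)$; hence every point of the Bowen ball lies in the single coarse atom cut out by the good coordinates,
$$B_{F_n}(x,\delta)\subseteq\bigcap_{g\in G_n}g^{-1}\mathcal{P}(gx)=\mathcal{P}^{G_n}(x),$$
so that $\mu(B_{F_n}(x,\delta))\le\mu(\mathcal{P}^{G_n}(x))$, and therefore
$$-\frac{1}{|F_n|}\log\mu(B_{F_n}(x,\delta))\ \ge\ -\frac{1}{|F_n|}\log\mu(\mathcal{P}^{G_n}(x)).$$

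With this inequality in hand, part (1) reduces to the claim that erasing the vanishing-density set of bad coordinates does not lower the exponential decay rate, i.e.\ that for $\mu$-a.e.\ $x$ one has $\lim_{\delta\to0}\liminf_n-\frac{1}{|F_n|}\log\mu(\mathcal{P}^{G_n}(x))\ge h(x,\mathcal{P})$. The two inputs are: first, Lindenstrauss's pointwise ergodic theorem (available because $\{F_n\}$ is tempered and obeys \eqref{eq-1-2}) gives $|R_n(x,\delta)|/|F_n|\to\mu_y(\partial_\delta\mathcal{P})$ for a.e.\ $x$ with $y=p(x)$, which tends to $0$ with $\delta$; second, Theorem \ref{th-2-2} gives $-\frac{1}{|F_n|}\log\mu(\mathcal{P}^{F_n}(x))\to h(x,\mathcal{P})=h_{\mu_y}(G,\mathcal{P}\mid p^{-1}(y))$, so the $\mu$-measure of the full atom already decays at the component rate. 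Granting the rate-preservation claim, I would integrate the resulting bound $\lim_{\delta\to0}\liminf_n-\frac{1}{|F_n|}\log\mu(B_{F_n}(x,\delta))\ge h(x,\mathcal{P})$ over $X_M$, where $h(x,\mathcal{P})\le h(y)<M$ keeps every integrand bounded, and then let $\mathcal{P}$ run through a refining sequence generating the Borel $\sigma$-algebra, so that $h(x,\mathcal{P})\uparrow h(y)$ for a.e.\ $x$ by Kolmogorov--Sinai on each ergodic component. Monotone convergence then yields
$$\int_{X_M}\lim_{\delta\to0}\liminf_{n}-\frac{1}{|F_n|}\log\mu(B_{F_n}(x,\delta))\,d\mu\ \ge\ \int_{h^{-1}([0,M))}h(y)\,d\pi(y),$$
which is exactly (1).

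For part (2) the same scheme runs on $X_\infty$, where every ergodic component has infinite entropy. Given $M>0$, I would choose a finite partition $\mathcal{Q}_M$ with $\mu(\partial\mathcal{Q}_M)=0$ and $h(x,\mathcal{Q}_M)\ge M$ for $\mu$-a.e.\ $x\in X_\infty$ --- possible because $\sup_{\mathcal{Q}}h(x,\mathcal{Q})=h(y)=\infty$ there --- and feed $\mathcal{Q}_M$ through the covering inclusion and the rate-preservation argument to obtain $\lim_{\delta\to0}\liminf_{n}-\frac{1}{|F_n|}\log\mu(B_{F_n}(x,\delta))\ge h(x,\mathcal{Q}_M)\ge M$ for $\mu$-a.e.\ $x\in X_\infty$. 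Since $M$ is arbitrary, the liminf equals $+\infty$ almost everywhere on $X_\infty$, giving (2).

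The main obstacle is precisely the rate-preservation step $\lim_{\delta\to0}\liminf_{n}-\frac{1}{|F_n|}\log\mu(\mathcal{P}^{G_n}(x))\ge h(x,\mathcal{P})$, and the difficulty is that the erased index set $R_n(x,\delta)$ depends on $x$: the inflation factor $\mu(\mathcal{P}^{G_n}(x))/\mu(\mathcal{P}^{F_n}(x))$ cannot be identified with any single fixed conditional entropy, and its pointwise logarithm can be large even when $|R_n|$ is small. Following Brin and Katok, I would fix $\eta>0$ and apply Egorov's theorem simultaneously to both a.e.\ convergences above --- the SMB convergence of Theorem \ref{th-2-2} and the ergodic averages of $\mathbf{1}_{\partial_\delta\mathcal{P}}$ --- to pass to a set of measure $>1-\eta$ on which $|R_n(x,\delta)|\le|F_n|(\mu(\partial_\delta\mathcal{P})+\eta)$ and the atom estimate $\mu(\mathcal{P}^{F_n}(x))\le\exp(-|F_n|(h(x,\mathcal{P})-\eta))$ both hold uniformly for large $n$; on this set the inflation coming from a density-$(\mu(\partial_\delta\mathcal{P})+\eta)$ block of coordinates is controlled by an amount that vanishes as first $\eta\to0$ and then $\delta\to0$. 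It is exactly this Egorov bookkeeping for an $x$-dependent coordinate set that makes the integrated formulation over $X_M$, together with the separate and cruder treatment of the infinite-entropy part $X_\infty$, the natural way to package the estimate.
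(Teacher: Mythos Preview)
Your geometric setup matches the paper's: the inclusion $B_{F_n}(x,\delta)\subseteq\mathcal{P}^{G_n}(x)$ (a subfamily of the Hamming $q$-ball of $\mathcal{P}^{F_n}$-atoms around $x$) and the two a.e.\ inputs --- Theorem~\ref{th-2-2} and the pointwise ergodic theorem for $\mathbf{1}_{\partial_\delta\mathcal{P}}$ --- are exactly what the paper assembles. The gap is the step you yourself flag as ``rate-preservation''. Egorov gives you, on a set $E$ of measure $>1-\eta$, the bounds $|R_n(x,\delta)|\le q|F_n|$ and $\mu(\mathcal{P}^{F_n}(x))\le\exp(-|F_n|(h(x,\mathcal{P})-\eta))$; but $\mathcal{P}^{G_n}(x)$ is a union of up to $r^{q|F_n|}$ atoms of $\mathcal{P}^{F_n}$, and for those atoms whose intersection with $E$ is empty you have no upper bound at all. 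Splitting $\mu(\mathcal{P}^{G_n}(x))=\mu(\mathcal{P}^{G_n}(x)\cap E)+\mu(\mathcal{P}^{G_n}(x)\setminus E)$ gives a second term bounded only by $\eta$, a constant independent of $n$, so $-\frac{1}{|F_n|}\log\mu(\mathcal{P}^{G_n}(x))$ is driven to $0$ rather than to $h(x,\mathcal{P})$. The sentence ``the inflation coming from a density-$q$ block of coordinates is controlled by an amount that vanishes'' is precisely the assertion that needs proof, and Egorov alone does not supply it.

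The paper closes this gap with a counting/Borel--Cantelli argument that your proposal omits. After a level decomposition $A_l=p^{-1}(h^{-1}([l\gamma,(l+1)\gamma)))$ (which makes the target rate essentially constant on each piece), it bounds the number of $(\eta,F_n)$-names in a Hamming $q$-ball by Stirling, $L_n\le\exp(\varepsilon|F_n|)$, and then argues \emph{backwards}: on the exceptional set $D_{l,n}=\{x\in E_l:\mu(B_{F_n}(x,\delta))>\exp((-l\gamma+5\varepsilon)|F_n|)\}$ some nearby atom must be fat, fat atoms are few because their total measure is $\le1$, hence at most $\exp((l\gamma-3\varepsilon)|F_n|)$ atoms meet $D_{l,n}$, and each carries measure $\le\exp((-l\gamma+2\varepsilon)|F_n|)$ on $E_l$; thus $\mu(D_{l,n})\le\exp(-\varepsilon|F_n|)$. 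The growth hypothesis \eqref{eq-1-2}, which you never invoke, is then essential to obtain $\sum_n\exp(-\varepsilon|F_n|)<\infty$ and apply Borel--Cantelli. Without this step (and without \eqref{eq-1-2}) the pointwise lower bound $\liminf_n-\frac{1}{|F_n|}\log\mu(B_{F_n}(x,\delta))\ge l\gamma-5\varepsilon$ is not available, and both parts of the lemma remain open.
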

\begin{proof}
Obviously (1) holds if $\mu(X_M)=0$ and (2) holds if $\mu(X_{\infty})=0$. So we may assume that both $\mu(X_M)$ and $\mu(X_{\infty})$ are positive.

Take $L\in \mathbb{N}$ to be sufficiently large and let $\gamma=\frac{M}{L}$.
For $l=0,1,\cdots,L-1$, let $A_l=p^{-1}(h^{-1}([l\gamma,(l+1)\gamma)))$ and let $A_{\infty}=X_{\infty}$.

For a finite measurable partition of $X$, say $\beta$, denote by ${\rm diam}(\beta)=\max_{B\in \beta}{\rm diam}(B)$ and $\partial\beta=\bigcup_{B\in\beta}\partial B$.

Let $\eta_m$ be a sequence of finite measurable partition of $X$ with $\lim_{m\rightarrow \infty}{\rm diam}(\eta_m)=0$ and $\mu(\partial\eta_m)=0$ for each $m$.
Then $$\lim_{m\rightarrow\infty}h_{\nu}(G,\eta_m)=h_{\nu}(X,G),\text{ for any }\nu\in M(X,G).$$
By the SMB theorem, for $\mu$-a.e. $x\in X$,
$$\lim_{n\rightarrow \infty}-\frac{1}{|F_n|}\log \mu(\eta_m^{F_n}(x))\triangleq h(x,\eta_m)=h_{\mu_y}(G,\eta_m|p^{-1}(y))=h_{\mu_y}(G,\eta_m),$$
where $p^{-1}(y)$ is the ergodic component that contains $x$, i.e. $p(x)=y$.
Hence for $\mu$-a.e. $x\in X$, $\lim_{m\rightarrow\infty}h(x,\eta_m)=h_{\mu_y}(p^{-1}(y),G)=h(y)$, where $y=p(x)$.

For any $\varepsilon>0$, by Egorov's Theorem, we then can choose $\eta=\eta_m$ for $m$ sufficiently large such that up to a subset of $X$ with small $\mu$ measure (say, less than $\varepsilon$), it holds that
$h(x,\eta)>\min \{\frac{1}{\varepsilon}, h(p(x))-\varepsilon\}$.
Hence there exists sufficiently large $N_2$, whence $n>N_2$, for each $l=0,1,\cdots,L-1$,
\begin{align}\label{set-2}
 \mu(\{x\in A_l: \forall n'\ge n, -\frac{1}{|F_{n'}|}\log \mu(\eta^{F_{n'}}(x))>l\gamma-2\varepsilon\})>\mu(A_l)-2\varepsilon,
\end{align}
and
\begin{align}\label{set-2'}
 \mu(\{x\in A_{\infty}: \forall n'\ge n, -\frac{1}{|F_{n'}|}\log \mu(\eta^{F_{n'}}(x))>\frac{1}{\varepsilon}-2\varepsilon\})>\mu(A_{\infty})-2\varepsilon.
\end{align}

For $\delta>0$, we define $U_{\delta}=\bigcup_{C\in\eta}\big(\bigcup_{x\in C}B(x,\delta)\setminus C\big)$.
Note that for each $C\in \eta$, $\bigcap_{\eta}\big(\bigcup_{x\in C}B(x,\delta)\setminus C\big)\subset \partial C$. For any sufficiently small $\varepsilon>q>0$, since $\mu(\partial\eta)=0$, we can find $\delta>0$ which is sufficiently small such that $\mu(U_{\delta})$ is less than $q^2$.
Applying the pointwise ergodic theorem (see e.g. Theorem 3.3 of \cite{L}) to the function $\chi_{U_{\delta}}$, for a.e. $x\in X$,
$$\frac{1}{|F_n|}\sum\limits_{g\in F_n}\chi_{U_{\delta}}(gx)\rightarrow f_{U_{\delta}}(x),$$
where $f_{U_{\delta}}(x)\in L^1_{\mu}(X)$ and
$\int_Xf_{U_{\delta}}(x)d\mu=\mu(U_{\delta})$.
Then there exists sufficiently large $N_1$, whence $n>N_1$,
\begin{align}\label{set-1}
 \mu(\{x\in X: &\forall n'\ge n, \sum\limits_{g\in F_{n'}}\chi_{U_{\delta}}(gx)<q|F_{n'}|\})>1-\varepsilon.
\end{align}

Let $E_l$ (resp. $E_{\infty}$) be the intersection of the sets in the left-hand side of \eqref{set-2} (resp. \eqref{set-2'}) and \eqref{set-1}.
Then for any $n>\max\{N_1,N_2\}$, $\mu(E_l)>\mu(A_l)-3\varepsilon$ for each $l=0,1,\cdots,L-1,\infty$.

Let $w_{\eta,F_n}(x)=(\eta(gx))_{g\in F_{n}}$ be the $(\eta,F_n)-$name of $x$. For any $y\in B(x,\delta)$, we have that either $\eta(x)=\eta(y)$ or $x\in U_{\delta}$.
Hence for each $l=0,1,\dots,L-1,\infty$, if $x\in E_l$ and $y\in B_{F_n}(x,\delta)$, then the Hamming distance between $w_{\eta,F_n}(x)$ and $w_{\eta,F_n}(y)$ is less than $q$. This implies that
whence $x\in E_l$, $$B_{F_n}(x,\delta)\subset \bigcup \{\eta_{F_n}(y): w_{\eta,F_n}(y) \text{ is }q-\text{close to }w_{\eta,F_n}(x)\text{ under Hamming metric}\}.$$

By Stirling's formula, there exists $N_3$ sufficiently large such that whence $n>N_3$, the total number of such $(\eta,F_n)-$names, denoted by $L_n$, can be estimated by:
\begin{align*}
 L_n\le\sum_{j=0}^{\lfloor q|F_n|\rfloor}\binom{|F_n|}{j}(\#\eta-1)^j\le \exp(K|F_n|),
\end{align*}
where $K$ can be chosen as
$$K=q+q\log(\#\eta-1)-q\log q-(1-q)\log(1-q).$$
For the calculation of $K$, one may refer to \cite{K} or \cite{BK}.

We now note that $K$ is a constant only dependent on $\#\eta$, $\varepsilon$ and $q$ but independent of $x$ and $n$.
Moreover, when $\gamma$ and $\varepsilon$ are fixed (hence $\#\eta$), we can choose $q$ small enough such that $K$ tends to $0$ while $q$ tends to $0$.
Hence we can make $$L_n\le \exp(\varepsilon|F_n|).$$

For $l=0,1,\cdots,L-1$, let $$D_{l,n}=\{x\in E_l: \mu(B_{F_n}(x,\delta))>\exp((-l\gamma+5\varepsilon)|F_n|)\}.$$
And let
$$D_{\infty,n}=\{x\in E_{\infty}: \mu(B_{F_n}(x,\delta))>\exp((-\frac{1}{\varepsilon}+5\varepsilon)|F_n|)\}.$$

To prove (1), we consider the case for $l=0,1,\cdots,L-1$.

If we can prove that $\sum_{n=N}^{\infty}\mu(D_{l,n})<\infty$, then apply the Borel-Cantelli Lemma:
for a.e. $x\in E_l$,
\begin{align}\label{ineq-3-10}
  \liminf_{n\rightarrow+\infty}-\frac{1}{|F_n|}\log \mu(B_{F_n}(x,\delta))\ge l\gamma-5\varepsilon.
\end{align}
Hence we can obtain that
\begin{align*}
&\int_{X_M}\liminf_{n\rightarrow+\infty}-\frac{1}{|F_n|}\log \mu(B_{F_n}(x,\delta))d\mu\\
\ge &\sum_{l=0}^{L-1}l\gamma\mu(E_l)-5\varepsilon\\
= &\sum_{l=0}^{L-1}l\gamma\mu(A_l)-\sum_{l=0}^{L-1}l\gamma(\mu(A_l)-\mu(E_l))-5\varepsilon\\
\ge &\int_{h^{-1}([0,M))}h(y)d\pi(y)-\gamma-\frac{1}{2}L(L-1)\gamma3\varepsilon-5\varepsilon.
\end{align*}
Let $\varepsilon$ go to $0$ first (this makes $\delta$ tending to $0$) and then let $\gamma$ go to $0$ (by letting $L$ tend to infinity),
$$\int_{X_M} \lim\limits_{\delta\rightarrow 0}\liminf\limits _{n\rightarrow+\infty}-\frac{1}{|F_n|}\log \mu(B_{F_n}(x,\delta))d\mu \ge \int_{h^{-1}([0,M))}h(y)d\pi(y).$$

Now we estimate the measures of $D_{l,n}$'s.

For any $x\in D_{l,n}$, in those $L_n-$many $(\eta,F_n)-$names which are $q-$close to $w_{\eta,F_n}(x)$ in Hamming distance,
there exists at least one corresponding atom of $\eta_{F_n}$ whose measure is greater than $\exp((-l\gamma+4\varepsilon)|F_n|)$.
The total number of such atoms will not exceed
$\exp((l\gamma-4\varepsilon)|F_n|)$. Hence $Q_{l,n}$, the total number of elements in $\eta^{F_n}$ that intersect $D_{l,n}$, satisfies:
$$Q_{l,n}\le L_n\exp((l\gamma-4\varepsilon)|F_n|)\le \exp((l\gamma-3\varepsilon)|F_n|).$$
Let $S_{l,n}$ denote the total measure of such $Q_{l,n}$ elements of $\eta^{F_n}$ whose intersections with $E_l$ have positive measure.
Then from \eqref{set-2},
$$S_{l,n}\le Q_{l,n}\exp((-l\gamma+2\varepsilon)|F_n|)\le\exp(-\varepsilon|F_n|),$$
which follows that
$$\mu(D_{l,n})\le S_{l,n}\le \exp(-\varepsilon|F_n|).$$

From the increasing condition \eqref{eq-1-2}, for sufficiently large $N_4$, whenever $n\ge N_4$, $\frac{|F_n|}{\log n}\ge \frac{2}{\varepsilon}$ holds,
which implies that $\exp(-\varepsilon|F_n|)\le n^{-2}$. And hence $\sum_{n=1}^{\infty}\mu(D_{l,n})<\infty$.

To prove (2), we need estimate the measures of $D_{\infty,n}$'s.

In the above treatment for $D_{l,n}$'s, replacing $l\gamma$ (resp. $D_{l,n}$'s, $Q_{l,n}$'s and $S_{l,n}$'s) by
$\frac{1}{\varepsilon}$ (resp. $D_{\infty,n}$'s, $Q_{\infty,n}$'s and $S_{\infty,n}$'s),
it also holds that $\sum_{n=N}^{\infty}\mu(D_{\infty,n})<\infty$, then apply the Borel-Cantelli Lemma again:
for a.e. $x\in E_{\infty}$,
\begin{align}\label{ineq-3-10'}
  \liminf_{n\rightarrow+\infty}-\frac{1}{|F_n|}\log \mu(B_{F_n}(x,\delta))\ge \frac{1}{\varepsilon}-5\varepsilon.
\end{align}

Letting $\varepsilon$  go to $0$, we then have for $\mu$ almost every $x\in X_{\infty}$,
$$\lim\limits_{\delta\rightarrow 0}\liminf\limits _{n\rightarrow+\infty}-\frac{1}{|F_n|}\log \mu(B_{F_n}(x,\delta))=\infty.$$
\end{proof}

Now we can finish the proof of Theorem \ref{th-A-1}.
\begin{proof}[Proof of Theorem \ref{th-A-1}]

Let $\delta>0$ be given and let $\xi_{\delta}$ be a finite measurable partition of $X$ such that the diameter of every set in $\xi_{\delta}$ is less than $\delta$.
Then by the SMB theorem for amenable group actions, for $\mu$-a.e. $x\in X$,
$$\lim_{n\rightarrow \infty}-\frac{1}{|F_n|}\log \mu({\xi_{\delta}}^{F_n}(x))\triangleq h(x,\xi_{\delta})=h_{\mu_y}(G,\xi_{\delta}|p^{-1}(y)),$$
where $y=p(x)$. Hence for any $M>0$,
$$\int_{X_M} h(x,\xi_{\delta})d\mu=\int_{h^{-1}([0,M))}h_{\mu_y}(G,\xi_{\delta}|p^{-1}(y))d\pi(y)\le \int_{h^{-1}([0,M))}h(y)d\pi(y).$$
Since ${\xi_{\delta}}^{F_n}(x)\subset B_{F_n}(x,\delta)$, we have that
\begin{align}\label{ineq-3-8}
  &\int_{X_M}\lim_{\delta\rightarrow 0}\limsup_{n\rightarrow+\infty}-\frac{1}{|F_n|}\log \mu(B_{F_n}(x,\delta))d\mu\\
  \le &\int_{X_M} \lim_{\delta\rightarrow 0}h(x,\xi_{\delta})d\mu=\int_{h^{-1}([0,M))}h(y)d\pi(y).\nonumber
\end{align}

Together with (1) of Lemma \ref{lem-2}, we have that
\begin{align}\label{ineq-3-9}
  &\int_{X_M}\lim_{\delta\rightarrow 0}\limsup_{n\rightarrow+\infty}-\frac{1}{|F_n|}\log \mu(B_{F_n}(x,\delta))d\mu\\
  =&\int_{X_M}\lim_{\delta\rightarrow 0}\liminf_{n\rightarrow+\infty}-\frac{1}{|F_n|}\log \mu(B_{F_n}(x,\delta))d\mu \nonumber \\
  =&\int_{h^{-1}([0,M))}h(y)d\pi(y)<\infty,\nonumber
\end{align}
which implies that for $\mu-$a.e. $x\in X_M$,
\begin{align*}
\lim_{\delta\rightarrow 0}\liminf_{n\rightarrow+\infty}-\frac{1}{|F_n|}\log \mu(B_{F_n}(x,\delta))=\lim_{\delta\rightarrow 0}\limsup_{n\rightarrow+\infty}-\frac{1}{|F_n|}\log \mu(B_{F_n}(x,\delta)).
\end{align*}

By (2) of Lemma \ref{lem-2}, for $\mu-$a.e. $x\in X_{\infty}$,
\begin{align*}
\lim_{\delta\rightarrow 0}\limsup_{n\rightarrow+\infty}-\frac{1}{|F_n|}\log \mu(B_{F_n}(x,\delta))
=\lim_{\delta\rightarrow 0}\liminf_{n\rightarrow+\infty}-\frac{1}{|F_n|}\log \mu(B_{F_n}(x,\delta))=\infty.
\end{align*}

Let $M$ tend to $\infty$, then $\mu(X_M\bigcup X_{\infty})$ tends to $1$. Hence for $\mu-$a.e. $x\in X$,
\begin{align*}
&\lim_{\delta\rightarrow 0}\liminf_{n\rightarrow+\infty}-\frac{1}{|F_n|}\log \mu(B_{F_n}(x,\delta)) \\
=&\lim_{\delta\rightarrow 0}\limsup_{n\rightarrow+\infty}-\frac{1}{|F_n|}\log \mu(B_{F_n}(x,\delta))\triangleq h_{\mu}(x).
\end{align*}
and $\int_X h_{\mu}(x)d\mu=h_{\mu}(G,X)$.

By \eqref{ineq-3-8} and \eqref{ineq-3-9}, for any $M>0$,
\begin{align*}
  \int_{h^{-1}([0,M))}h(y)d\pi(y)=&\int_{X_M}\lim_{\delta\rightarrow 0}\limsup_{n\rightarrow+\infty}-\frac{1}{|F_n|}\log \mu(B_{F_n}(x,\delta))d\mu\\
  \le & \int_{X_M} \liminf_{\delta\rightarrow 0}h(x,\xi_{\delta})d\mu
  \le \liminf_{\delta\rightarrow 0}\int_{X_M} h(x,\xi_{\delta})d\mu\\
  \le &\int_{h^{-1}([0,M))}h(y)d\pi(y)<\infty.
\end{align*}
Hence
\begin{align*}
h_{\mu}(x)=\liminf_{\delta\rightarrow 0}h(x,\xi_{\delta}), \text{ for }\mu-\text{a.e. }x\in X_M.
\end{align*}
Since $h(x,\xi_{\delta})$, $X_M$ and $X_{\infty}$ are all $G-$invariant, letting $M$ tend to infinity, $h_{\mu}(x)$ is also $G-$invariant on the whole $X$.
\end{proof}

\section{Proof of Theorem \ref{th-1-2}}

Let $(X,G)$ be a compact metric $G-$action topological dynamical system and $G$ a countable discrete amenable group.
For any $\mu\in M(X)$, $x\in X,n\in\mathbf{N}$, $\varepsilon>0$ and any F{\o}lner sequence $\{F_n\}$,
denote by
$$\underline h_{\mu}^{loc}(x,\varepsilon,\{F_n\})=\liminf_{n\rightarrow +\infty}-\frac{1}{|F_n|}\log \mu(B_{F_n}(x,\varepsilon)).$$
Then the lower local entropy of $\mu$ at $x$ (along $\{F_n\}$) is defined by
$$\underline h_{\mu}^{loc}(x,\{F_n\})=\lim_{\varepsilon\rightarrow 0}\underline h_{\mu}^{loc}(x,\varepsilon,\{F_n\})$$
and the lower local entropy of $\mu$ is defined by $$\underline h_{\mu}^{loc}(\{F_n\})=\int_X\underline h_{\mu}^{loc}(x,\{F_n\})d\mu.$$
Similarly, we can define the upper local entropy.

In \cite{ZC}, the authors proved the following variational principle between the lower local entropy and the Bowen entropy of compact subsets.

\begin{theorem}[Theorem 3.1 of \cite{ZC}]\label{th-3-2}\quad
Let $(X,G)$ be a compact metric $G-$action topological dynamical system and $G$ a discrete countable amenable group. If $K\subseteq X$ is non-empty and compact
and $\{F_n\}$ a sequence of finite subsets in $G$ with the increasing condition $\lim\limits_{n\rightarrow+\infty}\frac{|F_n|}{\log n}=\infty$,
then
$$h^B_{top}(K, \{F_n\})=\sup\{\underline h_{\mu}^{loc}(\{F_n\}):\mu(K)=1\},$$
where the supremum is taken over $\mu\in M(X)$.
\end{theorem}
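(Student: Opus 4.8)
The plan is to prove the two inequalities separately. Throughout I abbreviate $\mathcal M(Z,s)=\mathcal M(Z,s,\{F_n\})$ and use that, by the remark following Proposition~\ref{prop-2-1}, $\mathcal M(\cdot,s)$ is an outer measure, so it is monotone. For the inequality $h^B_{top}(K,\{F_n\})\ge\sup_{\mu}$ I would prove the stronger pointwise bound $h^B_{top}(K,\{F_n\})\ge\int_X\underline h_{\mu}^{loc}(x,\{F_n\})\,d\mu$ for every $\mu\in M(X)$ with $\mu(K)=1$, via a mass distribution (Frostman-type lower) argument. It suffices to show $h^B_{top}(K,\{F_n\})\ge s'$ whenever $A=\{x\in K:\underline h_{\mu}^{loc}(x,\{F_n\})>s'\}$ has positive measure, since then $h^B_{top}(K,\{F_n\})\ge\operatorname{ess\,sup}_{\mu}\underline h_\mu^{loc}\ge\int\underline h_\mu^{loc}\,d\mu$. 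Writing $g_{2\varepsilon}(x)=\liminf_n-\frac1{|F_n|}\log\mu(B_{F_n}(x,2\varepsilon))$, I note $g_{2\varepsilon}\uparrow\underline h_\mu^{loc}$ as $\varepsilon\to0$, so I may fix $\varepsilon>0$ small enough that $A'=\{x\in A:g_{2\varepsilon}(x)>s'\}$ has positive measure. Put $A_N=\{x\in A':\mu(B_{F_n}(x,2\varepsilon))\le e^{-s'|F_n|}\text{ for all }n\ge N\}$; these increase to $A'$ up to a null set, so fix $N$ with $\mu(A_N)>0$. Given any $\{B_{F_{n_i}}(x_i,\varepsilon)\}$ with $n_i\ge N$ covering $A_N$, I discard the balls missing $A_N$, pick $y_i\in A_N\cap B_{F_{n_i}}(x_i,\varepsilon)$ in each, and use $B_{F_{n_i}}(x_i,\varepsilon)\subseteq B_{F_{n_i}}(y_i,2\varepsilon)$ to get $\mu(B_{F_{n_i}}(x_i,\varepsilon))\le e^{-s'|F_{n_i}|}$; hence $\sum_i e^{-s'|F_{n_i}|}\ge\sum_i\mu(B_{F_{n_i}}(x_i,\varepsilon))\ge\mu(A_N)$. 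Thus $\mathcal M(A_N,\varepsilon,s')\ge\mu(A_N)>0$, and monotonicity gives $\mathcal M(K,s')>0$, i.e. $h^B_{top}(K,\{F_n\})\ge s'$.

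For the reverse inequality I must, given any $s<h^B_{top}(K,\{F_n\})$, construct a single $\mu\in M(X)$ with $\mu(K)=1$ and $\int\underline h_\mu^{loc}\,d\mu\ge s$. Choose $t\in(s,h^B_{top}(K,\{F_n\}))$, so $\mathcal M(K,t)=+\infty$; since $\mathcal M(K,\varepsilon,t)$ increases as $\varepsilon\downarrow0$ to $\mathcal M(K,t)$ and $\mathcal M(K,\varepsilon,t)=\lim_N\mathcal M(K,N,\varepsilon,t)$, the pre-contents $\mathcal M(K,N,\varepsilon,t)$ become arbitrarily large. The decisive ingredient is a \emph{dynamical Frostman lemma}: for each $\varepsilon>0$ there exist $N(\varepsilon)\in\N$, a constant $c>0$ (depending only on $n$ through the stated bound), and a Borel probability measure $\mu_\varepsilon$ supported on $K$ such that $\mu_\varepsilon(B_{F_n}(x,\varepsilon))\le c\,e^{-t|F_n|}$ for all $x\in X$ and all $n\ge N(\varepsilon)$.

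I expect the construction of $\mu_\varepsilon$ to be the main obstacle. At the fixed scale $\varepsilon$ one reduces, using compactness of $K$, to a finite combinatorial problem on the Bowen balls $B_{F_n}(x,\varepsilon)$ with $N(\varepsilon)\le n\le N_1$ that are needed to cover $K$, and distributes mass so that the total mass is comparable to the $t$-content while no ball $B_{F_n}(x,\varepsilon)$ receives more than $c\,e^{-t|F_n|}$; this is a weighted covering / min-cut argument of the type used for weighted Hausdorff contents. The point that makes it go through for an arbitrary sequence $\{F_n\}$ of finite subsets, with no F{\o}lner or nesting structure, is that a cover enters the content only through the weights $e^{-t|F_{n_i}|}$, so the argument is insensitive to how the $F_n$ sit inside one another, and the increasing condition \eqref{eq-1-2} is not needed at this stage. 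Letting $N_1\to\infty$ and normalizing produces $\mu_\varepsilon$ with the stated bounds.

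Finally I would take a weak-$*$ limit $\mu$ of $\{\mu_\varepsilon\}$ along a sequence $\varepsilon\to0$; since each $\mu_\varepsilon$ is supported on the compact set $K$, so is $\mu$, giving $\mu(K)=1$. The remaining delicate point is that weak-$*$ convergence does not directly bound $\mu(B_{F_n}(x,\delta))$ from above, because of the balls' boundaries. I would circumvent this by fixing a scale $\delta>0$ and a slightly smaller radius and using that $B_{F_n}(x,\delta)$ contains an open Bowen ball on which $\mu\le\liminf_\varepsilon\mu_\varepsilon$, so that the uniform Frostman bounds survive as $\mu(B_{F_n}(x,\delta))\le c\,e^{-t|F_n|}$ for all large $n$. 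The constant $c$ washes out in the entropy limit, so $\underline h_\mu^{loc}(x,\delta,\{F_n\})\ge t$ for every $x$, whence $\underline h_\mu^{loc}(x,\{F_n\})\ge t>s$ for $\mu$-a.e. $x$ and $\int\underline h_\mu^{loc}\,d\mu\ge t>s$. Combined with the lower bound, this yields the asserted equality.
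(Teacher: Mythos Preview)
The paper does not prove this statement: it is quoted verbatim as Theorem~3.1 of \cite{ZC} and used as a black box in the proof of Theorem~\ref{th-1-2}. So there is no proof here to compare your proposal against.

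That said, your outline follows the standard two-sided strategy (mass distribution principle for the inequality $\ge$, a dynamical Frostman lemma for $\le$) that is indeed the method used in \cite{ZC} and, before that, by Feng and Huang in the $\Z$-action setting. Two remarks on your sketch. First, in the lower bound you actually prove the stronger fact $h^B_{top}(K,\{F_n\})\ge \operatorname{ess\,sup}_{\mu}\underline h_\mu^{loc}(\cdot,\{F_n\})$; this is correct and implies the integral bound, but it is worth noting that the theorem only asks for the weaker inequality. Second, and more importantly, the weak-$*$ limit step in your upper bound is superfluous and is where your own reservations are concentrated. Once the Frostman measure $\mu_\varepsilon$ supported on $K$ satisfies $\mu_\varepsilon(B_{F_n}(x,\varepsilon))\le c\,e^{-t|F_n|}$ for all $n\ge N(\varepsilon)$, you already have $\underline h_{\mu_\varepsilon}^{loc}(x,\varepsilon,\{F_n\})\ge t$ for every $x$ (the constant $c$ disappears because $|F_n|\to\infty$), and since $\underline h_{\mu_\varepsilon}^{loc}(x,\delta,\{F_n\})$ is nondecreasing as $\delta\downarrow 0$ this gives $\underline h_{\mu_\varepsilon}^{loc}(x,\{F_n\})\ge t$. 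Hence $\underline h_{\mu_\varepsilon}^{loc}(\{F_n\})\ge t>s$, and $\mu_\varepsilon$ itself witnesses the supremum; no limiting measure is needed, and the delicate boundary issue you flag simply does not arise. The genuine technical content you leave unwritten is the construction of $\mu_\varepsilon$; this is done in \cite{ZC} (following Feng--Huang) via a Hahn--Banach/duality argument on the weighted covering functional, not by a finite min-cut computation.
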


With the help of the above theorem, we can now give the proof of Theorem \ref{th-1-2}.

\begin{proof}[Proof of Theorem \ref{th-1-2}.]
Let $\mu\in M(X,G)$ and $Y$ a subset of $X$ with $\mu(Y)=1$.
Let $\{Y_n\}_{n\in\N}$ be an increasing sequence of compact subsets of $Y$ such that $\mu(Y_n)>1-\frac{1}{n}$ for each $n\in\N$.

Then by Proposition \ref{prop-2-1},
\begin{align}\label{eq-3-1}
  h^B_{top}(Y,\{F_n\})\ge h^B_{top}(\bigcup_{n\in\N} Y_n,\{F_n\})=\lim_{n\rightarrow \infty}h^B_{top}(Y_n,\{F_n\}).
\end{align}
Denote by $\mu_n$ the restriction of $\mu$ on $Y_n$, i.e. for any $\mu-$measurable set $A\subset X$,
$$\mu_n(A)=\frac{\mu(A\cap Y_n)}{\mu(Y_n)}.$$
Applying Theorem \ref{th-3-2},
\begin{align}\label{eq-3-2}
  h^B_{top}(Y_n,\{F_n\})&=\sup \{\underline h_{\nu}^{loc}(\{F_n\}):\nu\in M(X),\nu(Y_n)=1\}\nonumber \\
  &\ge \underline h_{\mu_n}^{loc}(\{F_n\}).
\end{align}
Note that
\begin{align*}
  \underline h_{\mu_n}^{loc}(\{F_n\})&=\int_{Y_n}\lim_{\varepsilon\rightarrow 0}\liminf_{m\rightarrow +\infty}-\frac{1}{|F_m|}\log \mu_n(B_{F_m}(x,\varepsilon))d\mu_n\\
  &=\frac{1}{\mu(Y_n)}\int_{Y_n}\lim_{\varepsilon\rightarrow 0}\liminf_{m\rightarrow +\infty}-\frac{1}{|F_m|}\log \frac{\mu(B_{F_m}(x,\varepsilon)\cap Y_n)}{\mu(Y_n)}d\mu\\
  &\ge \frac{1}{\mu(Y_n)}\int_{Y_n}\lim_{\varepsilon\rightarrow 0}\liminf_{m\rightarrow +\infty}-\frac{1}{|F_m|}\log \frac{\mu(B_{F_m}(x,\varepsilon))}{\mu(Y_n)}d\mu\\
  &= \frac{1}{\mu(Y_n)}\int_{Y_n}\lim_{\varepsilon\rightarrow 0}\liminf_{m\rightarrow +\infty}-\frac{1}{|F_m|}\log \mu(B_{F_m}(x,\varepsilon))d\mu.
\end{align*}
By Theorem \ref{th-A-1},
$$\int_Y \lim_{\varepsilon\rightarrow 0}\liminf_{m\rightarrow +\infty}-\frac{1}{|F_m|}\log \mu(B_{F_m}(x,\varepsilon))=h_{\mu}(X,G).$$
Hence
\begin{align*}
  \lim_{n\rightarrow \infty}\underline h_{\mu_n}^{loc}(\{F_n\})\ge h_{\mu}(X,G).
\end{align*}
Together with \eqref{eq-3-1} and \eqref{eq-3-2},
$$h_{\mu}(X,G)\le h^B_{top}(Y, \{F_n\}).$$
\end{proof}

Noticing that $\mu(G_{\mu})=1$ for $\mu\in E(X,G)$, by Theorem \ref{th-1-2}, we have the following corollary.

\begin{corollary}\label{coro-3-2}\quad
Let $(X,G)$ and $\{F_n\}$ be as in Theorem \ref{th-1-1} and $\mu\in E(X,G)$,
then $$h_{\mu}(X,G)\le h^B_{top}(G_{\mu}, \{F_n\}).$$
\end{corollary}

\begin{remark}\quad
In general, when $\mu$ is non-ergodic, $G_{\mu}$ may not have full $\mu$ measure. In fact, there exist examples that
$h_{\mu}(X,G)>0$ while $G_{\mu}=\emptyset$ for $\mathbb Z-$actions. Hence $h_{\mu}(X,G)\le h^B_{top}(G_{\mu},\{F_n\})$ may not hold.
\end{remark}

\section{Proof of Theorem \ref{th-1-3}}

In this section, we will show the proof of Theorem \ref{th-1-3}.
Corollary \ref{coro-3-2} gives the lower bound. For the upper bound, we use the ideas of Pfister and Sullivan \cite{PS}.

For $\mu\in E(X,G)$, let $\{K_m\}_{m\in\N}$ be a decreasing sequence of closed convex neighborhoods of $\mu$ in $M(X)$ and let
$$A_{n,m}=\{x\in X:\frac{1}{|F_n|}\sum_{g\in F_n}\delta_x\circ g^{-1}\in K_m\}, \text{ for }m,n\in\N.$$
Then for any $m,N\ge 1$, $G_{\mu}\subset \bigcup_{n\ge N}A_{n,m}$.

Let $\varepsilon>0$, $F$ a finite subset of $G$. A subset $E\subset X$ is said to be {\it $(F,\varepsilon)$-separated},
if for any $x,y\in E$ with $x\neq y$, $d_F(x,y)>\varepsilon$.

Denote by $N(A_{n,m},n,\varepsilon)$ the maximal cardinality of any $(F_n,\varepsilon)$-separated subset of $A_{n,m}$.

  {\bf Claim.} $$\lim_{\varepsilon\rightarrow 0}\lim_{m\rightarrow\infty}\limsup_{n\rightarrow\infty}\frac{1}{|F_n|}\log N(A_{n,m},n,\varepsilon)\le h_{\mu}(X,G).$$
\begin{proof}[Proof of the claim]
If not, suppose that
\begin{align*}
  \lim_{\varepsilon\rightarrow 0}\lim_{m\rightarrow\infty}\limsup_{n\rightarrow\infty}\frac{1}{|F_n|}\log N(A_{n,m},n,\varepsilon)> h_{\mu}(X,G)+\delta,
\end{align*}
for some $\delta>0$. Then there exist $\varepsilon_0>0$ and $M\in \N$ such that for any $0<\varepsilon<\varepsilon_0$ and any $m\ge M$, it holds that
\begin{align*}
  \limsup_{n\rightarrow\infty}\frac{1}{|F_n|}\log N(A_{n,m},n,\varepsilon)> h_{\mu}(X,G)+\delta.
\end{align*}
Hence we can find a sequence $\{m(n)\}$ with $m(n)\rightarrow \infty$ such that
\begin{align*}
  \limsup_{n\rightarrow\infty}\frac{1}{|F_n|}\log N(A_{n,m(n)},n,\varepsilon)\ge h_{\mu}(X,G)+\delta.
\end{align*}
Now let $E_n$ be a $(F_n,\varepsilon)$-separated set of $A_{n,m(n)}$ with maximal cardinality and define
\begin{align*}
    \sigma_{n}=\frac{1}{\#E_n}\sum_{x\in E_n}\delta_x \text{ and }\mu_{n}=\frac{1}{|F_n|}\sum_{g\in F_n}\sigma_n\circ g^{-1}.
\end{align*}
Since
$$\frac{1}{|F_n|}\sum_{g\in F_n}\delta_x\circ g^{-1}\in K_{m(n)}, \text{ for any }x\in E_n$$
and $$\mu_n=\frac{1}{\#E_n}\sum_{x\in E_n}\frac{1}{|F_n|}\sum_{g\in F_n}\delta_x\circ g^{-1},$$
by the convexity of $K_{m}$'s, $\mu_n\in K_{m(n)}$. And hence $\mu_n\rightarrow \mu$ as $n$ goes to infinity.

Let $\beta$ be a finite Borel partition of $X$ such that ${\rm diam}(\beta)<\varepsilon$ and $\mu(\partial \beta)=0$.
Then each element of $\beta^{F_n}$ contains at most one point in $E_n$. Hence
    \begin{align*}
      &H_{\sigma_n}(\beta^{F_n})=\log \#E_n=\log N(A_{n,m(n)},n,\varepsilon).
    \end{align*}
    By Lemma 3.1 (3) of \cite{HYZ}, the multi-subadditivity of $H_{\sigma_n}(\beta^\bullet)$, for any finite subset $F\subset G$,
    \begin{align*}
      H_{\sigma_n}(\beta^{F_n})
      &\le \frac{1}{|F|}\sum_{g\in F_n}H_{\sigma_n\circ g^{-1}}(\beta^{F})+|F_n\setminus \{g\in G: F^{-1}g\subseteq F_n\}|\log\#\beta.
    \end{align*}

    Hence
    \begin{align*}
      &\;\;\;\;\frac{1}{|F_n|}H_{\sigma_n}(\beta^{F_n})\\
      &\le \frac{1}{|F|}\frac{1}{|F_n|}\sum_{g\in F_n}H_{\sigma_n\circ g^{-1}}(\beta^{F})+\frac{1}{|F_n|}|F_n\setminus \{g\in G: F^{-1}g\subseteq F_n\}|\log\#\beta\\
      &\le \frac{1}{|F|}H_{\mu_n}(\beta^F)+\frac{|F_n\setminus \{g\in G: F^{-1}g\subseteq F_n\}|}{|F_n|}\log\#\beta.
    \end{align*}

    Let $A$ and $K$ be two finite subsets of $G$ and let $\delta>0$. Recall that the set $A$ is said to be {\it $(K, \delta)$-invariant} if
    $$\frac{|B(A,K)|}{|A|}<\delta,$$ where
    $$B(A,K)=\{g\in G: Kg\cap A\neq\emptyset \text { and } Kg\cap(G\setminus A)\neq\emptyset\}$$ is the {\it $K$-boundary} of $A$.
    An equivalent condition for the sequence of finite subsets $\{F_n\}$ of $G$ to be a F{\o}lner sequence is that for any finite subset $K$ of $G$ and any $\delta>0$, the set $F_n$ is $(K, \delta)$-invariant for all sufficiently large $n$ (see \cite{OW}).

    Denote by $\tilde{F}=F\cup\{e_G\}$. Then we have
    $$F_n\setminus \{g\in G: F^{-1}g\subseteq F_n\}=F_n\cap FF_n^c\subseteq \tilde{F}F_n\cap \tilde{F}F_n^c=B(F_n, \tilde{F}^{-1}).$$
    Thus for any $\delta>0$, if we let $n$ be large enough such that $F_n$ is $(\tilde{F}^{-1}, \delta)$-invariant, then
    $$\frac{|F_n\setminus \{g\in G: F^{-1}g\subseteq F_n\}|}{|F_n|}\le \frac{|B(F_n, \tilde{F}^{-1})|}{|F_n|}<\delta.$$

    Since $\mu(\partial \beta)=0$, we have $\mu(\partial \beta^F)=0$. Letting $n$ tend to infinity,
    \begin{align*}
      \limsup_{n\rightarrow\infty}\frac{1}{|F_n|}\log N(A_{n,m(n)},n,\varepsilon)\le \frac{1}{|F|}H_{\mu}(\beta^F).
    \end{align*}
    This leads to
    \begin{align*}
      \limsup_{n\rightarrow\infty}\frac{1}{|F_n|}\log N(A_{n,m(n)},n,\varepsilon)\le h_{\mu}(X,G),
    \end{align*}
    a contradiction.
\end{proof}
  By the claim, for each $\delta> 0$, there exists $\varepsilon_0>0$ satisfying that for any $0<\varepsilon<\varepsilon_0$, there
exists $M\in \N$ (depending on $\varepsilon$) such that whenever $m>M$, it holds that
\begin{align*}
  \limsup_{n\rightarrow\infty}\frac{1}{|F_n|}\log N(A_{n,m},n,\varepsilon)\le h_{\mu}(X,G)+\frac{\delta}{2}.
\end{align*}
 Let $E_{n,m}$ be a $(F_n,\varepsilon)$-separated set of $A_{n,m}$ with maximal cardinality, then $A_{n,m}\subset \bigcup_{x\in E_{n,m}}B_{F_n}(x,2\varepsilon)$.
 Hence for $s=h_{\mu}(X,G)+2\delta$,
 \begin{align*}
   \mathcal{M}(G_{\mu},s,N,2\varepsilon)&\le \mathcal{M}(\bigcup_{n\ge N}A_{n,m},s,N,2\varepsilon)\\
   &\le \sum_{n\ge N} \sum_{x\in E_{n,m}}\exp(-s|F_n|)\\
   &\le\sum_{n\ge N}\exp((h_{\mu}(X,G)+\delta-s)|F_n|)\\
   &=\sum_{n\ge N}\exp(-\delta|F_n|).
 \end{align*}
 Since $\{F_n\}$ satisfies the condition $\frac{|F_n|}{\log n}\rightarrow \infty$,
 \begin{align*}
   \mathcal{M}(G_{\mu},s,2\varepsilon)\le \lim_{N\rightarrow\infty}\sum_{n\ge N}\exp(-\delta|F_n|)=0,
 \end{align*}
 which implies that $h^B_{top}(G_{\mu},\{F_n\})\le h_{\mu}(X,G)$.

{\bf Acknowledgements}
The research was supported by the National Basic Research Program of China (Grant No. 2013CB834100) and the National Natural
Science Foundation of China (Grant No. 11271191).

\end{document}